\def\@tocline#1#2#3#4#5#6#7{\relax
  \ifnum #1>\c@tocdepth % then omit
  \else
    \par \addpenalty\@secpenalty\addvspace{#2}%
    \begingroup \hyphenpenalty\@M
    \@ifempty{#4}{%
      \@tempdima\csname r@tocindent\number#1\endcsname\relax
    }{%
      \@tempdima#4\relax
    }%
    \parindent\z@ \leftskip#3\relax \advance\leftskip\@tempdima\relax
    \rightskip\@pnumwidth plus4em \parfillskip-\@pnumwidth
    #5\leavevmode\hskip-\@tempdima
      \ifcase #1
      \or\or \hskip 2em \or \hskip 2homologyem \else \hskip 3em \fi%
      #6\nobreak\relax
    \dotfill\hbox to\@pnumwidth{\@tocpagenum{#7}}\par
    \nobreak
    \endgroup
  \fi}
\theoremstyle{plain}
\newtheorem{theorem}{Theorem}[section]
\newtheorem{lemma}[theorem]{Lemma}
\newtheorem{corollary}[theorem]{Corollary}
\newtheorem{proposition}[theorem]{Proposition}
\newtheorem{conjecture}[theorem]{Conjecture}
\theoremstyle{definition}
\newtheorem{notation}[theorem]{Notation}
\newtheorem{remark}[theorem]{Remark}
\newtheorem{definition}[theorem]{Definition}
\newtheorem{example}[theorem]{Example}
\numberwithin{equation}{section}
\newcommand{\Hom}{{\rm Hom}}
\newcommand{\Spec}{{\rm Spec \,}}
\newcommand{\Gal}{{\rm Gal}}
\newcommand{\N}{{\mathbb N}}
\newcommand{\Z}{{\mathbb Z}}
\newcommand{\A}{{\mathbb A}}
\newcommand{\KMW}{{\mathbf K}^{\rm MW}}		% Milnor-Witt K-theory
\newcommand{\Ccell}{{C}^{\rm cell}}			% cellular A^1-chain complex
\newcommand{\Hcell}{{\mathbf H}^{\rm cell}} % cellular A^1-homology sheaves
\newcommand{\HAred}{\~{\mathbf H}^{\A^1}}  	% reduced A^1-homology sheaves
\newcommand{\HA}{{\mathbf H}^{\A^1}}  	% A^1-homology sheaves
\newcommand{\ZA}{{\mathbf Z}_{\A^1}}   % free strictly A^1-invariant sheaf
\def\<{\langle}
\def\>{\rangle} 
\def\-{\overline} 
\def\~{\widetilde}
\def\^{\widehat}
\def\@{\mathcal}
\def\!{\mathscr}
\def\#{\mathbb}
\def\_{\underline}
\begin{document}

\title[Transfers on $\A^1$-connected components and the norm principle]{Transfers on $\A^1$-connected components of quasi-split groups and the norm principle}

\author{Amit Hogadi}
\address{Department of Mathematical Sciences, Indian Institute of Science Education and Research Pune, Dr. Homi Bhabha Road, Pashan, Pune 411008, India.}
\email{amit@iiserpune.ac.in}

\author{Anand Sawant}
\address{School of Mathematics, Tata Institute of Fundamental Research, Homi Bhabha Road, Colaba, Mumbai 400005, India.}
\email{asawant@math.tifr.res.in}
\date{}

\thanks{The authors acknowledge the support of SERB MATRICS grant MTR/2023/000228, India DST-DFG Project on Motivic Algebraic Topology DST/IBCD/GERMANY/DFG/2021/1 and the Department of Atomic Energy, Government of India, under project no. 12-R\&D-TFR-5.01-0500.}
%\subjclass[2010]{}
%\keywords{}

\begin{abstract} 
We show that the sheaf of $\A^1$-connected components of a quasi-split group over a perfect field is a strictly $\A^1$-invariant sheaf with (Voevodsky) transfers. As a consequence, we show that the norm principle holds for any quasi-split group over a perfect field.
\end{abstract}

\maketitle
\tableofcontents

\setlength{\parskip}{2pt plus1pt minus1pt}

\section{Introduction}
\label{section introduction}

One of the motivations behind this article is to initiate a study of questions around norm principles for algebraic groups via their $\A^1$-connected components. The sheaf of $\A^1$-connected components of an algebraic group contains interesting information pertaining to near-rationality properties of the group because of its relationship with $R$-equivalence classes \cite{Balwe-Hogadi-Sawant-JTop}, \cite{Balwe-Sawant-R-eqivalence-IMRN}.  Versions of norm principles have been introduced and studied in \cite{Gille-1993}, \cite{Gille-1997} and \cite{Merkurjev-norm-principle}, especially in relation with the $R$-triviality of the algebraic group in question.    We will focus on the norm principle formulated by Merkurjev \cite{Merkurjev-norm-principle}, which is a generalization of the classical norm principles in the theory of quadratic forms by Knebusch and by Scharlau (see \cite[Section 3]{barquero-merkurjev}).  The norm principle is known to hold for reductive groups whose Dynkin diagrams do not contain connected components of type $D_n$ for $n \geq 4$, $E_6$ or $E_7$ \cite{barquero-merkurjev}. For groups of type $D_n$, the norm principle has been shown to hold over complete discretely valued fields in \cite{Bhaskhar-Chernousov-Merkurjev}. The norm principle is expected to hold in general.

In view of this connection, a natural question is whether the sheaf of $\A^1$-connected components of a reductive algebraic groups admits appropriate transfers and whether the norm principle can be deduced from this. The aim of this paper is to answer this question in the affirmative for quasi-split groups over perfect fields. Standard arguments (see \cite{Balwe-Sawant-reductive}, \cite{Balwe-Hogadi-Sawant-JTop} for example) reduce the study of $\A^1$-connected components of a reductive algebraic group to that of the simply connected cover of its derived subgroup. The sheaf of $\A^1$-connected components of a split, semisimple, simply connected group is trivial (see \cite[Theorem 5.2]{Balwe-Sawant-reductive}). As a result, quasi-split groups give the first class of algebraic groups whose $\A^1$-connected components are a priori nontrivial. The main result of this paper is as follows (see Theorem \ref{theorem transfers} for a more precise statement).

\begin{theorem}
\label{theorem intro main}
Let $G$ be a quasi-split group over a perfect field $k$.  Then the sheaf $\pi_0^{\A^1}(G)$ is a strictly $\A^1$-invariant sheaf with Voevodsky transfers.
\end{theorem}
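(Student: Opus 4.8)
The plan is to compute $\pi_0^{\A^1}(G)$ explicitly and then to read off both strict $\A^1$-invariance and the transfers from the shape of the answer. First I would run the dévissage recalled in the introduction to reduce to the case that $G$ is absolutely almost simple, quasi-split and simply connected: the radical and the cocenter are tori, which are $\A^1$-rigid, and a central isogeny together with a Weil-restriction argument peels these off, while by \cite[Theorem 5.2]{Balwe-Sawant-reductive} the split types are $\A^1$-connected and contribute nothing. The content therefore concentrates in the outer forms of types $^{2}A_{n}$, $^{2}D_{n}$, $^{3}D_{4}$, $^{6}D_{4}$ and $^{2}E_{6}$.

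For such a $G$ I would exploit the relative Bruhat decomposition. Fixing a Borel $B=TU$ defined over $k$ with opposite unipotent $U^{-}$, the big cell $\Omega=U^{-}\cdot T\cdot U$ is open in $G$; since $U$ and $U^{-}$ are split unipotent groups their root filtrations split, so they are isomorphic as $k$-varieties to affine spaces and $\Omega\cong\A^{N}\times T$. Hence $\pi_0^{\A^1}(\Omega)\cong\pi_0^{\A^1}(T)\cong T$, the last identification because tori are $\A^1$-rigid. As $G$ is generated by $T$ and the affine root subgroups $U_{(a)}$, every section of $G$ is, after an $\A^1$-homotopy run through these contractible root groups, represented by a torus element; combined with the relative Bruhat decomposition this shows that the homomorphism $T\to\pi_0^{\A^1}(G)$ is an epimorphism of Nisnevich sheaves of groups. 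Since $\pi_0^{\A^1}(G)$ carries a group structure and is now a quotient of the commutative sheaf $T$, it is abelian.

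The heart of the argument is to pin down the kernel. For each relative simple root $a$ the rank-one subgroup $G_{(a)}=\langle U_{\pm a}\rangle$ produces, via the cocharacters $h_a(u)=n_a(u)n_a(1)^{-1}$, a family of torus elements that become trivial in $\pi_0^{\A^1}(G)$ as $u$ runs over the root group and sweeps out an $\A^1$-homotopy. When $G_{(a)}$ is of split $A_1$-type or a Weil restriction $R_{L/k}SL_2$, the parameter ranges over all of $\mathbb{G}_m$, respectively $R_{L/k}\mathbb{G}_m$; for the exotic $SU_3$-type subgroups occurring in type $^{2}A_{2n}$ it ranges only over a norm-image locus. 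Thus $\pi_0^{\A^1}(G)$ is the quotient of $T$ by the subsheaf generated by these images. I expect the main obstacle to lie exactly here: $\pi_0^{\A^1}(G)$ is a priori obtained only after sheafifying and possibly iterating the chain-connected-components construction \cite{Balwe-Hogadi-Sawant-JTop}, so one must prove that this process stabilises and agrees with the naive torus-quotient, and one must account precisely for the exotic relations so that no unexpected further collapsing occurs.

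Granting this description, the payoff is that although $T$ itself may be a non-induced torus carrying no Voevodsky transfers, the quotient reorganises as the cokernel of a morphism between quasi-trivial (induced) tori built from the norm maps $N_{L/k}\colon R_{L/k}\mathbb{G}_m\to\mathbb{G}_m$ of the relevant finite separable extensions. Each such induced torus is a strictly $\A^1$-invariant sheaf carrying Voevodsky transfers, and the norm morphisms are compatible with these transfers; so, using that the strictly $\A^1$-invariant sheaves and the sheaves with transfers each form an abelian category over the perfect field $k$ (by Morel and Voevodsky respectively), the cokernel is again strictly $\A^1$-invariant and inherits transfers. The remaining substantive task is to verify that the transfers obtained this way are those governing $R$-equivalence, since it is this compatibility that feeds the norm principle in the applications. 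This would establish Theorem \ref{theorem intro main}, in the sharper form of Theorem \ref{theorem transfers}.
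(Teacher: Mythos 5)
Your argument hinges on the explicit identification of $\pi_0^{\A^1}(G)$ with the naive quotient of $T$ by the subsheaf generated by the elements $h_a(u)$ coming from the relative rank-one subgroups, and you yourself flag this identification as ``the main obstacle'' --- but you never close it. This is not a technical footnote; it is the entire difficulty. A priori $\pi_0^{\A^1}(G)$ is the Nisnevich sheafification of an iterated chain-connectedness construction, and there is no general mechanism guaranteeing that a naive quotient of $T$ by explicitly listed relations is already $\A^1$-invariant, nor that no further collapsing occurs after sheafification. Moreover, in the exotic $^{2}A_{2n}$ cases your ``norm-image locus'' is not visibly the image of a homomorphism of group sheaves, so the subsequent claim that the quotient ``reorganises as the cokernel of a morphism between induced tori'' --- on which your whole transfer argument rests --- is an unverified assertion. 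The paper circumvents computing the kernel altogether: from the $\A^1$-fibration $T \to G \to G/T$ and the $\A^1$-connectedness of $G/T$ (Theorem \ref{theorem quasi-split}, proved via the geometric criterion of \cite{Balwe-Hogadi-Sawant-JAG}), one gets an exact sequence $\pi_1^{\A^1}(G/T) \xrightarrow{\theta} T \to \pi_0^{\A^1}(G) \to 0$; the $\A^1$-Hurewicz theorem and cellular $\A^1$-homology then identify the image of $\theta$ with the image of the composite $\Ccell_1(G/T) = \oplus_i\, {\KMW_1}^{[L_i]} \twoheadrightarrow \pi_1^{\A^1}(G/T) \xrightarrow{\theta} T$ (using $\partial_1 = 0$), and Lemmas \ref{lemma transfers} and \ref{lemma WR transfers} show that \emph{any} homomorphism from ${\KMW_1}^{[L]}$ to a torus preserves transfers. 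One thus only needs to know that the kernel is the image of something with automatic transfer-compatibility, never what the kernel actually is.

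There is also a factual error at the root of your strategy: the premise that a non-induced torus ``may carry no Voevodsky transfers'' is false. Every torus is a strictly $\A^1$-invariant sheaf with transfers, and the paper's proof leans on precisely this (Lemma \ref{lemma torus subsheaf} begins by recalling it); this misconception is what pushes you toward seeking an explicit presentation by induced tori in the first place. Once one knows $T$ has transfers, it suffices to show that the image of $\theta$ is stable under them, and by Lemma \ref{lemma torus subsheaf} stability under field transfers is enough. Two secondary problems: your d\'evissage to absolutely almost simple, simply connected outer forms relies on the classification of algebraic groups, which the paper explicitly avoids, and is itself nontrivial for $\pi_0^{\A^1}$ (central isogenies introduce boundary terms in nonabelian cohomology, and quotients by subsheaves not known to be transfer-stable do not obviously inherit transfers); the paper needs no such reduction, since the fibration argument applies to an arbitrary quasi-split $G$ directly. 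Likewise, the epimorphism $T \to \pi_0^{\A^1}(G)$ cannot be established merely by ``running homotopies through root groups'' on sections; in the paper it falls out of the exact sequence once $G/T$ is known to be $\A^1$-connected.
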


We observe that if $\pi_0^{\A^1}(G)$ has transfers for field extensions, then the norm principle holds for $G$ (Theorem \ref{theorem pi0norm}). We thus have the following consequence (see Corollary \ref{corollary norm principle}) for all quasi-split groups over perfect fields.

\begin{corollary}
\label{corollary intro main} 
For any quasi-split group $G$ over a perfect field $k$, the norm principle (see Definition \ref{defn norm principle}) holds with respect to any morphism $G \to H$, where $H$ is a torus.
\end{corollary}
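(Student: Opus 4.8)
The plan is to deduce the corollary formally from the two results already in place: the main theorem (Theorem~\ref{theorem transfers}), which asserts that $\pi_0^{\A^1}(G)$ is a strictly $\A^1$-invariant sheaf with Voevodsky transfers, and the bridge result (Theorem~\ref{theorem pi0norm}), which asserts that transfers for field extensions on $\pi_0^{\A^1}(G)$ already force the norm principle. In this reading the entire mathematical weight rests in Theorem~\ref{theorem transfers}; the corollary is then a matter of extracting, from the full Voevodsky transfer structure, exactly the field-extension transfers that Theorem~\ref{theorem pi0norm} requires as input.

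First I would recall that a presheaf with Voevodsky transfers is, by definition, a presheaf on the category of smooth $k$-schemes with finite correspondences as morphisms. For a finite separable extension $L/k$ the scheme $\Spec L$ is smooth over $k$ --- and here the perfectness of $k$ is precisely what guarantees that every finite extension is separable --- so the transpose of the graph of the finite morphism $\Spec L \to \Spec k$ is a finite correspondence from $\Spec k$ to $\Spec L$. Evaluating $\pi_0^{\A^1}(G)$ on this correspondence yields a transfer homomorphism $\tau_{L/k} \colon \pi_0^{\A^1}(G)(L) \to \pi_0^{\A^1}(G)(k)$, functorial in the usual sense (compatible with towers of extensions and with base change). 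This is exactly a system of transfers for field extensions, so the hypothesis of Theorem~\ref{theorem pi0norm} is met by virtue of Theorem~\ref{theorem transfers}.

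With both inputs in hand, the corollary follows by applying Theorem~\ref{theorem pi0norm} to $G$, giving the norm principle for every homomorphism $G \to H$ into a torus $H$. Concretely, functoriality of the transfer makes the square relating $\pi_0^{\A^1}(G)$ and $\pi_0^{\A^1}(H)$ commute, and for a torus the transfer on $\pi_0^{\A^1}(H)$ is induced by the classical norm $N_{L/k}$ on $H$. Combined with the identification of the sections of $\pi_0^{\A^1}(\,\cdot\,)$ over a field with $R$-equivalence classes (as in \cite{Balwe-Hogadi-Sawant-JTop}), this says precisely that $N_{L/k}$ carries the image of $G(L)$ into the image of $G(k)$ inside $H$ modulo $R$-equivalence, which is the norm principle of Definition~\ref{defn norm principle}.

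The step I expect to need the most care --- beyond Theorem~\ref{theorem transfers} itself --- is verifying that the abstractly defined Voevodsky transfer $\tau_{L/k}$ coincides with the concrete norm map appearing in the definition of the norm principle. Here one is aided by the fact that Theorem~\ref{theorem transfers} already presents $\pi_0^{\A^1}(G)$ as a strictly $\A^1$-invariant, hence abelian, sheaf, so no non-abelian subtlety arises; since $H$ is commutative the homomorphism $G \to H$ factors through an abelian quotient, and the required compatibility of $\tau_{L/k}$ with $N_{L/k}$ reduces to the already-understood case of tori. This matching is exactly what Theorem~\ref{theorem pi0norm} encodes, so once it is available the corollary is immediate.
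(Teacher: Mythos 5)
Your proposal is correct and follows exactly the paper's route: the corollary is deduced formally by combining Theorem~\ref{theorem transfers} with Theorem~\ref{theorem pi0norm}, which is precisely the paper's one-line proof. The only superfluous element is the detour through $R$-equivalence classes; the norm principle follows on the nose (not merely modulo $R$-equivalence) because $\pi_0^{\A^1}(H) \cong H$ for a torus and $G(F) \to \pi_0^{\A^1}(G)(F)$ is surjective, exactly as encoded in the diagram chase inside the proof of Theorem~\ref{theorem pi0norm}.
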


We emphasize that our proofs of our main results are independent of the classification of algebraic groups, unlike previous approaches to the norm principle. The proof makes key use of techniques from $\A^1$-homotopy theory. The main novelty is the use of the theory of cellular $\A^1$-homology developed in \cite{Morel-Sawant-pi1}. The outline of our proof of Theorem \ref{theorem intro main} and the key ingredients are as follows:
\begin{itemize}
\item Fix a Borel subgroup $B$ and the corresponding maximal torus $T$ of $G$. The $\A^1$-fiber sequence $B \to G \to G/B$ induces a long exact sequence
\begin{equation}
\label{eqn intro exact sequence}
\cdots \pi_1^{\A^1}(G/B) \to T \to \pi_0^{\A^1}(G) \to \pi_0^{\A^1}(G/B). 
\end{equation}

\item Theorem \ref{theorem quasi-split} shows that the scheme $G/B$ is $\A^1$-connected using the Bruhat decomposition and the geometric criterion for $\A^1$-connectedness proved in \cite{Balwe-Hogadi-Sawant-JAG}. 

\item Since $G/B$ is $\A^1$-connected, by the $\A^1$-Hurewicz theroem one has $$\pi_1^{\A^1}(G/B) \cong \HA_1(G/B) \cong \Hcell_1(G/B).$$ The terms of the cellular $\A^1$-chain complex of $G/B$ admits field transfers (see Definition \ref{defn field transfers}). %compatible with the differentials and hence, induces field transfers on the cellular $\A^1$-homology groups of $G/B$ (see Theorem \ref{theorem cellular transfers}). 

\item Since any torus is a strictly $\A^1$-invariant sheaf with transfers in the sense of Voevodsky, it remains to show that the morphism $\pi_1^{\A^1}(G/B) \to T$ in \eqref{eqn intro exact sequence} preserves transfers. This is done in Theorem \ref{theorem transfers} by using the identification as a cellular $\A^1$-homology group, reducing to the case of field transfers and observing that any homomorphism from a Weil restriction of $\KMW_1$ to any torus preserves transfers (Lemma \ref{lemma transfers}, Lemma \ref{lemma WR transfers}).
\end{itemize}

It is an open question whether the sheaf of $\A^1$-connected components of any reductive algebraic group admits transfers (or at least, \emph{field transfers} in the sense of Definition \ref{defn field transfers}). We are optimistic that classification-independent proofs of the norm principle in new cases can be obtained via this approach using $\A^1$-homotopy theory.

\subsection*{Acknowledgements}

Anand Sawant thanks Fabien Morel for encouraging him to explore the existence of transfers on $\A^1$-connected components and for helpful discussions. We also thank Alexey Ananyevskiy and Sandeep Varma for their helpful comments and discussions. Hospitality of IISER Pune while this work was being done is gratefully acknowledged.

\section{\texorpdfstring{$\A^1$}{A1}-connectedness of the flag variety of a quasi-split group}
\label{section strict}

Let $k$ be a field. Recall from \cite[Definition 1.7]{Morel-book} that a Nisnevich sheaf of groups $\mathcal F$ on the category $Sm_k$ of essentially smooth, finite type, separated schemes over $k$ is said to be 
\begin{itemize}
\item \emph{strongly $\A^1$-invariant} if the presheaves $H^n_{\rm Nis}(-, \mathcal F)$ are $\A^1$-invariant on $Sm_k$ for $n =0, 1$; 

\item \emph{strictly $\A^1$-invariant} if $\mathcal F$ is a sheaf of abelian groups and the presheaves $H^n_{\rm Nis}(-, \mathcal F)$ are $\A^1$-invariant on $Sm_k$ for every $n \in \N$.
\end{itemize}

A fundamental theorem of Morel \cite[Theorem 1.16]{Morel-book} in $\A^1$-algebraic topology states that strongly $\A^1$-invariant sheaves on $Sm_k$ of abelian groups are strictly $\A^1$-invariant. We will denote the category of strictly $\A^1$-invariant sheaves on $Sm_k$ by $Ab_{\A^1}(k)$. The category of strictly $\A^1$-invariant sheaves on $Sm_k$ is an abelian category.  This nontrivial fact is a consequence of the fact that the category of strictly $\A^1$-invariant sheaves on $Sm_k$ can be identified as the heart of the homological $t$-structure on the $\A^1$-derived category \cite[Lemma 6.2.11]{Morel-connectivity}.   

For a reductive algebraic group $G$ over $k$, it is an open question whether the sheaf $\pi_0^{\A^1}(G)$ is a sheaf of abelian groups.  In view of \cite[Corollary 4.17]{Choudhury}, showing that $\pi_0^{\A^1}(G)(F)$ is an abelian group for any finitely generated, separable field extension $F$ of $k$ is sufficient to show that $\pi_0^{\A^1}(G)$ is a sheaf of abelian groups.  

\begin{conjecture}
\label{conj strict}
For every reductive group $G$ over a field $k$, the sheaf $\pi_0^{\A^1}(G)$ is a sheaf of abelian groups. Moreover, $\pi_0^{\A^1}(G)$ is strictly $\A^1$-invariant. 
\end{conjecture}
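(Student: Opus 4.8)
The plan is to establish the conjecture by a dévissage that isolates the genuinely hard case, and to be candid that this hard case is the reason the statement is only conjectural. First I would reduce to the semisimple, simply connected case. As recalled in the introduction, the radical torus of $G$ and the quotient torus $G/G^{\mathrm{der}}$ contribute only tori, which are $\A^1$-rigid and hence already strictly $\A^1$-invariant sheaves with (Voevodsky) transfers. By the standard arguments of \cite{Balwe-Sawant-reductive} and \cite{Balwe-Hogadi-Sawant-JTop}, the sheaf $\pi_0^{\A^1}(G)$ is controlled by $\pi_0^{\A^1}(\tilde{G})$, where $\tilde{G}$ is the simply connected cover of $G^{\mathrm{der}}$, the remaining contributions being governed by tori and finite central isogenies. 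Thus the heart of the conjecture is the case of $G$ semisimple and simply connected.

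For such $G$, the split case is already settled ($\pi_0^{\A^1}(G)$ is trivial by \cite[Theorem 5.2]{Balwe-Sawant-reductive}) and the quasi-split case is Theorem \ref{theorem intro main}. To attack a general isotropic $G$, I would set up an induction using a minimal parabolic $k$-subgroup $P \subset G$ and the $\A^1$-fiber sequence $P \to G \to G/P$, in exact parallel with the Borel case \eqref{eqn intro exact sequence}. Two inputs would be needed. First, that the generalized flag variety $G/P$ is $\A^1$-connected; this should follow from the relative Bruhat decomposition together with the geometric criterion for $\A^1$-connectedness of \cite{Balwe-Hogadi-Sawant-JAG}, exactly as in the proof of Theorem \ref{theorem quasi-split}. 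Second, an induction on the semisimple rank of the anisotropic kernel of a Levi subgroup of $P$, reducing a given group to groups with strictly smaller anisotropic kernel. Combining these with the strict $\A^1$-invariance of the torus contribution, one would aim to propagate both abelian-ness and strict $\A^1$-invariance up the dévissage.

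The hard part, and the genuine obstacle, is the base of this induction: the anisotropic case. When $G$ is anisotropic it admits no proper parabolic subgroup defined over $k$, so the entire geometric engine of this paper — parabolic subgroups, the Bruhat decomposition, and the resulting cellular $\A^1$-homology computation of \cite{Morel-Sawant-pi1} — is unavailable, and no fiber sequence of the above type exists. One tempting approach is Galois descent from a splitting field $L/k$ over which $G$ becomes quasi-split and the sheaf is understood; but $\pi_0^{\A^1}$ does not interact with base change in a directly usable way, and neither strict $\A^1$-invariance nor the transfer structure is known to descend along $L/k$. An alternative is to argue field-wise: by \cite[Corollary 4.17]{Choudhury} it suffices to show that $\pi_0^{\A^1}(G)(F) \cong G(F)/R$ is abelian for every finitely generated separable extension $F/k$, after which the remaining task of strict $\A^1$-invariance reduces, by Morel's theorem \cite[Theorem 1.16]{Morel-book}, to strong $\A^1$-invariance of this abelian sheaf. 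However, even the abelian-ness of the group $G(F)/R$ of $R$-equivalence classes is open for anisotropic groups in general, so this route runs into the same fundamental gap. Absent the classification, there is at present no handle on anisotropic kernels, and it is precisely here that new ideas beyond the techniques of this paper would be required.
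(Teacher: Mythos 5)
The statement you were asked to prove is Conjecture \ref{conj strict}, which the paper does \emph{not} prove: it is stated as an open conjecture, and the paper's actual results (Theorem \ref{theorem transfers}, Corollary \ref{corollary norm principle}) settle only the quasi-split case over a perfect field. So there is no proof in the paper to compare against, and your proposal --- explicitly a reduction strategy rather than a proof --- correctly refrains from claiming to close the question. Your diagnosis is accurate and matches the paper's own framing: the split case is \cite[Theorem 5.2]{Balwe-Sawant-reductive}, the quasi-split case is Theorem \ref{theorem intro main}, and everything beyond that founders on the anisotropic kernel, where no parabolic subgroup, no Bruhat cell structure, and hence no cellular $\A^1$-homology computation in the style of \cite{Morel-Sawant-pi1} is available. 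Your field-wise reformulation is also the right one: by \cite[Corollary 4.17]{Choudhury} abelian-ness reduces to abelian-ness of sections over finitely generated separable extensions, and (given strong $\A^1$-invariance, which is the subject of \cite{Balwe-Hogadi-Sawant-JTop}) strictness would then follow from \cite[Theorem 1.16]{Morel-book}; but the abelian-ness of $G(F)/R$ for anisotropic $G$ is itself a long-standing open problem, so this route terminates in the same gap. That gap is the reason the statement is a conjecture, not a defect of your analysis.

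Two caveats about the dévissage itself, in case you pursue the isotropic induction. First, to extract the long exact sequence from $P \to G \to G/P$ you need this to be an $\A^1$-homotopy principal fibration; the paper obtains this for $B \to G \to G/B$ from \cite[Thm 6.50]{Morel-book}, whose hypothesis is that $\pi_0^{\A^1}(P) \cong \pi_0^{\A^1}(L)$ ($L$ a Levi of $P$) is \emph{strongly} $\A^1$-invariant. Since the derived group of $L$ is essentially the anisotropic kernel, this hypothesis is part of what the induction is supposed to deliver, so the induction must be ordered carefully to avoid circularity --- it is the inductive hypothesis for the anisotropic base case, which is exactly the case you cannot handle. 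Second, the reduction from a general reductive $G$ to the simply connected cover of $G^{\mathrm{der}}$ is not formal at the sheaf level: central isogenies do not induce exact sequences on $\pi_0^{\A^1}$ in any naive sense, and the ``standard arguments'' of \cite{Balwe-Sawant-reductive} and \cite{Balwe-Hogadi-Sawant-JTop} yield field-wise statements and strong invariance under hypotheses; propagating abelian-ness, strict invariance, and transfers across that reduction requires verification rather than citation. Neither caveat changes your bottom line, which is correct: the conjecture is open precisely at the anisotropic case.
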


Recall that a \emph{torus} over a field $k$ is an algebraic group over $k$, which on base change to an algebraic closure of $k$ is isomorphic to a product of $\mathbb G_m$'s.  Tori are strongly (and consequently, strictly) $\A^1$-invariant.  We record a proof of this simple fact below for the sake of completeness.

\begin{lemma}
\label{lemma torus}
Let $T$ be torus over $k$. Then for any essentially smooth $k$-scheme $S$, the projection $\A^1_S\xrightarrow{\pi} S$ induces an isomorphism $$H^1_{\textrm{\'et}}(S,T) \cong H^1_{\text{\'et}}(\A^1_S, T).$$
\end{lemma}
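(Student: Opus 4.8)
The plan is to reduce the assertion to the homotopy invariance of the Picard group and of the units, via Galois descent along a splitting field of $T$. Injectivity of $\pi^*$ is immediate: the zero-section $s\colon S \to \A^1_S$ satisfies $\pi \circ s = \mathrm{id}_S$, whence $s^* \circ \pi^* = \mathrm{id}$ on $H^1_{\text{\'et}}(S,T)$, so $\pi^*$ is split injective. The content is therefore that $\pi^*$ is surjective, i.e.\ an isomorphism.

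First I would treat the split case, i.e.\ a finite product of copies of $\mathbb{G}_m$. By Hilbert's Theorem 90, $H^1_{\text{\'et}}(-,\mathbb{G}_m) = \Pic(-)$, and for the regular scheme $S$ the flat pullback $\pi^*\colon \Pic(S) \to \Pic(\A^1_S)$ is an isomorphism (homotopy invariance of the Picard group of a regular scheme). Likewise $H^0_{\text{\'et}}(-,\mathbb{G}_m) = \mathcal O^*(-)$ is homotopy invariant since $S$ is reduced, so $\mathcal O^*(\A^1_S) = \mathcal O^*(S)$. Both isomorphisms are natural in $S$.

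For a general torus $T$, I would fix a finite Galois extension $L/k$ with group $\Gamma = \Gal(L/k)$ over which $T$ splits, say $T_L \cong \mathbb{G}_m^n$. For an essentially smooth $k$-scheme $X$, the base change $X_L := X \times_k L \to X$ is a finite \'etale Galois cover with group $\Gamma$, and the Hochschild--Serre spectral sequence gives a four-term exact sequence
\begin{equation*}
0 \to H^1\bigl(\Gamma, H^0_{\text{\'et}}(X_L, T)\bigr) \to H^1_{\text{\'et}}(X, T) \to H^1_{\text{\'et}}(X_L, T)^\Gamma \to H^2\bigl(\Gamma, H^0_{\text{\'et}}(X_L, T)\bigr),
\end{equation*}
functorial in $X$. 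Applying this to $X = S$ and to $X = \A^1_S$, using $(\A^1_S)_L = \A^1_{S_L}$ and the comparison map $\pi^*$, one reduces to the split torus $T_L$ over $S_L$, for which $H^0_{\text{\'et}}(-,T_L) = \mathcal O^*(-)^n$ and $H^1_{\text{\'et}}(-,T_L) = \Pic(-)^n$. Applied $\Gamma$-equivariantly, the split case shows that $\pi^*$ induces isomorphisms on the first, third and fourth terms of the sequence, and the five lemma then yields that $\pi^*$ is an isomorphism on $H^1_{\text{\'et}}(S,T)$.

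The main obstacle is precisely the non-split case: for a split torus the statement is nothing but the homotopy invariance of $\Pic$, whereas a general $T$ splits only over $L$, so one must propagate homotopy invariance through Galois descent. The two delicate points are that the homotopy-invariance isomorphisms for units and for $\Pic(S_L)$ are $\Gamma$-equivariant (which is guaranteed by their naturality in the scheme, since $\Gamma$ acts through automorphisms of $S_L$ over $S$ commuting with $\pi$), and that one controls the term $H^2(\Gamma, \mathcal O^*(S_L)^n)$, which is again handled by the homotopy invariance of units.
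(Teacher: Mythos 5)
Your proof is correct, but it organizes the descent differently from the paper. The paper applies the Leray spectral sequence to the projection $\pi\colon \A^1_S \to S$ itself, reducing the lemma to the vanishing of $R^1\pi_*T_{|\A^1_S}$ (together with the rigidity statement $\pi_*T_{|\A^1_S}\cong T_{|S}$); this vanishing is then checked \'etale-locally on $S$, i.e.\ over strictly henselian local rings $R$, where the torus is automatically split and one only needs $\Pic(\A^1_R)=0$ for a regular local ring. Your argument instead keeps $S$ global and descends along a finite Galois splitting field $L/k$, via the Hochschild--Serre sequence for the $\Gamma$-torsor $S_L \to S$ and a four/five lemma argument. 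The trade-off is this: the paper's local route needs only the (easy) vanishing of the Picard group of $\A^1$ over a regular local ring and involves no equivariance bookkeeping, since after strict henselization everything splits and $H^1$ vanishes outright; your global route needs the full homotopy invariance $\Pic(S_L)\cong\Pic(\A^1_{S_L})$ for the regular scheme $S_L$, homotopy invariance of units to control the $H^1(\Gamma,-)$ and $H^2(\Gamma,-)$ terms, and the $\Gamma$-equivariance of all comparison maps --- delicate points that you correctly identify and justify by naturality. Both proofs ultimately rest on the same two inputs (Hilbert 90 plus homotopy invariance of $\Pic$ and of units over regular/reduced schemes); yours trades the strict-henselization limit argument for group-cohomology bookkeeping, and has the mild advantage of making the reduction to the split case completely explicit.
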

\begin{proof}
We claim that there is an isomorphism of \'etale sheaves $\pi_*(T_{|\A^1_S}) \cong T_{|S}$ on $S$.  By the Leray spectral sequence 
\[
E_2^{p,q} = H^p(S, R^q\pi_*T_{|\A^1_S}) \Rightarrow H^{p+q}(\A^1_S, T_{\A^1_S}),
\]
the required assertion follows from the assertion that $R^1\pi_*T_{|\A^1_S}=0$.  Since this can be checked \'etale locally on $S$, we may assume that $S$ is a strictly henselian scheme.  In this case, the structure map $S\to \Spec(k)$ factors through the separable closure of $\Spec(k_{\rm sep}) \to \Spec (k)$.  Thus, we may assume that $k$ is separably closed.  Since $T_{k_{\rm sep}}$ is split and since $H^1_{\text{\'et}}(\A^1_S, \#G_m) = 0$, it follows that $R^1\pi_*T_{|\A^1_S}=0$.  
\end{proof}

\begin{lemma} 
\label{lemma nistoet}
Let $G$ be any algebraic group over $k$. If the presheaf $$U\mapsto H^1_{\text{\'et}}(U, G)$$  is $\A^1$-invariant, then so is the presheaf
 $$ U\mapsto H^1_{\rm Nis}(U, G).$$
\end{lemma}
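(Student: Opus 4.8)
The plan is to compare the two presheaves through the canonical comparison map
$$\phi_U \colon H^1_{\rm Nis}(U, G) \to H^1_{\text{\'et}}(U, G),$$
natural in $U$, and to realize the source as a subpresheaf of the target whose $\A^1$-invariance can be read off from that of $H^1_{\text{\'et}}(-,G)$. Concretely, I would first show that $\phi_U$ is injective and identify its image with the set of \'etale $G$-torsors that are trivial at every point, and then check that the hypothesised isomorphism $H^1_{\text{\'et}}(U,G) \xrightarrow{\sim} H^1_{\text{\'et}}(\A^1_U, G)$ carries this subset bijectively onto the corresponding subset over $\A^1_U$.

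For the injectivity of $\phi_U$, I would use that a $G$-torsor $P$ (for either topology) is representable by a smooth $U$-scheme, that its class is trivial precisely when $P(U) \neq \emptyset$, and that an isomorphism of torsors is a $G$-equivariant $U$-isomorphism of the representing schemes, a datum that does not depend on the topology. Thus $[P]_{\rm Nis} = [P']_{\rm Nis}$ if and only if $[P]_{\text{\'et}} = [P']_{\text{\'et}}$, which gives injectivity. For the image, the key point is a fibrewise criterion: an \'etale $G$-torsor $\bar P \to U$ is Nisnevich-locally trivial if and only if for every $u \in U$ the fibre $\bar P_u$ has a $\kappa(u)$-rational point. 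One implication is immediate by restricting a henselian-local section to the closed point; the converse uses that $\bar P \to U$ is smooth (as $G$ is a smooth group scheme and torsors are \'etale-locally trivial), so that, by Hensel's lemma, a rational point of the special fibre lifts to a section over $\Spec \mathcal O^h_{U,u}$, which in turn spreads out to a section over a Nisnevich neighbourhood of $u$. Writing $\mathcal N(U) \subseteq H^1_{\text{\'et}}(U,G)$ for the subset of classes satisfying this pointwise triviality, we obtain a natural isomorphism $H^1_{\rm Nis}(U,G) \cong \mathcal N(U)$.

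It then remains to show that the $\A^1$-invariance isomorphism $p^* \colon H^1_{\text{\'et}}(U,G) \xrightarrow{\sim} H^1_{\text{\'et}}(\A^1_U, G)$ restricts to a bijection $\mathcal N(U) \to \mathcal N(\A^1_U)$. That $p^*$ maps $\mathcal N(U)$ into $\mathcal N(\A^1_U)$ is clear, since the fibre of $p^*\bar P$ over a point $w \in \A^1_U$ lying above $u \in U$ is the base change of the trivial torsor $\bar P_u$ along $\kappa(u) \to \kappa(w)$, hence trivial. For the reverse inclusion I would use the zero-section $0 \colon U \to \A^1_U$: if $p^* \bar P$ lies in $\mathcal N(\A^1_U)$, then in particular its fibre over the point $0(u)$, which has residue field $\kappa(u)$ and coincides with $\bar P_u$, is trivial for every $u \in U$, so that $\bar P \in \mathcal N(U)$. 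Since $p^*$ is already injective on $H^1_{\text{\'et}}$, this shows that $p^*$ restricts to a bijection on $\mathcal N$, and transporting along the isomorphism $\phi$ yields the $\A^1$-invariance of $H^1_{\rm Nis}(-,G)$. The main obstacle is establishing the fibrewise criterion for Nisnevich-local triviality together with its smoothness and Hensel's-lemma input; once this is in place, the reduction to the \'etale statement is purely formal.
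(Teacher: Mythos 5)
Your proposal is correct, but it is organized differently from the paper's proof and does strictly more work. Both arguments turn on the same two ingredients: injectivity of the natural comparison map $\phi_U \colon H^1_{\rm Nis}(U,G)\to H^1_{\text{\'et}}(U,G)$ and the zero section $s\colon U \to \A^1_U$. The paper's proof consists of only these: it observes that injectivity of $\phi$ for all essentially smooth $U$ already implies the lemma by a purely formal chase --- given $\beta\in H^1_{\rm Nis}(\A^1_U,G)$, the assumed bijectivity of $p^*$ on \'etale $H^1$ together with $s^*p^*=\mathrm{id}$ yields $\phi(\beta)=p^*\phi(s^*\beta)=\phi(p^*s^*\beta)$, whence $\beta=p^*s^*\beta$ --- and then proves injectivity by the section argument (a Nisnevich torsor whose \'etale class is trivial has a section, which trivializes it Nisnevich-locally as well). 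Your additional, load-bearing step --- identifying the image of $\phi$ with the subpresheaf $\mathcal N$ of \'etale classes that are trivial on all residue fields, via smoothness of torsors, Hensel's lemma over $\mathcal O^h_{U,u}$, and spreading out --- is correct (it is the standard characterization of Nisnevich-local triviality), but it is not needed: once $\phi$ is injective and natural, $\A^1$-invariance transfers formally from $H^1_{\text{\'et}}$ to $H^1_{\rm Nis}$ with no knowledge of the image. What your route buys is a concrete description of $H^1_{\rm Nis}$ inside $H^1_{\text{\'et}}$, plus an injectivity argument (isomorphisms of representable torsors are topology-independent scheme isomorphisms) that establishes honest injectivity of the pointed-set map, whereas the paper's proof literally only verifies triviality of the kernel --- which for non-abelian $H^1$ is a priori weaker and strictly speaking requires a twisting argument (or your version of injectivity) for the formal reduction to go through. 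What the paper's route buys is brevity; what yours buys is completeness and a reusable structural statement. One caveat common to both: representability of torsors under an arbitrary algebraic group should be justified (it is standard for affine, hence for linear algebraic, groups, which is the paper's setting).
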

 \begin{proof}
It is enough to show that for every essentially smooth $U/k$ the map 
$$ H^1_{\rm Nis}(U, G) \to H^1_{\text{\'et}}(U, G)$$ is injective. Let $\alpha \in H^1_{\rm Nis}(U, G)$ be a class which is in the kernel. Let $P\to U$ be a $G$-torsor representing $\alpha$.  The image of $\alpha$ in $H^1_{\text{\'et}}(U, G)$ is trivial if and only if $P \to U$ has a section.  But then $\alpha$ had to be trivial to begin with.
\end{proof}

\begin{proposition}
\label{prop solvable}
For any solvable algebraic group $B$ over a field $k$, the sheaf $\pi_0^{\A^1}(B)$ is strongly $\A^1$-invariant. Moreover, if $k$ is perfect, then $\pi_0^{\A^1}(B)$ is strictly $\A^1$-invariant.
\end{proposition}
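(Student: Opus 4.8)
The plan is to reduce the statement to the case of a torus, which is settled by Lemmas~\ref{lemma torus} and~\ref{lemma nistoet}, by stripping off the split unipotent radical. Let $W \subseteq B$ denote the maximal smooth connected $k$-split unipotent normal subgroup; over a perfect field this is the unipotent radical of the identity component $B^0$. As $W$ is $k$-split, the quotient map $B \to \bar B := B/W$ is a $W$-torsor that is Zariski-locally trivial with fibres isomorphic to affine space, hence an $\A^1$-weak equivalence; consequently $\pi_0^{\A^1}(B) \cong \pi_0^{\A^1}(\bar B)$. By construction $\bar B$ is built by successive extensions from a torus $T$, a $k$-wound unipotent group, and the finite étale component group $\pi_0(B)$, each of which is $\A^1$-rigid (for a $k$-wound unipotent group $V$ every morphism $\A^1 \to V$ is constant). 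Thus $\bar B$ is $\A^1$-rigid, $\pi_0^{\A^1}(B)$ is the Nisnevich sheaf represented by $\bar B$, and in particular $H^0_{\rm Nis}(-, \pi_0^{\A^1}(B)) = \bar B(-)$ is $\A^1$-invariant.

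It then remains to prove that $H^1_{\rm Nis}(-, \bar B)$ is $\A^1$-invariant, which together with the above gives strong $\A^1$-invariance. By Lemma~\ref{lemma nistoet} it suffices to treat $H^1_{\text{\'et}}(-, \bar B)$. Here I would run the long exact sequences of étale cohomology pointed sets attached to the filtration above (and their twists), comparing them along the projection $\A^1_S \to S$: the torus terms are $\A^1$-invariant by Lemma~\ref{lemma torus}, the finite-étale terms agree after pullback, and the $k$-wound unipotent terms contribute trivially, so that a diagram chase forces the middle term $H^1_{\text{\'et}}(-, \bar B)$ to be $\A^1$-invariant. This shows $\pi_0^{\A^1}(B)$ is strongly $\A^1$-invariant. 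When $k$ is perfect, $W$ is the full unipotent radical and $\bar B$ is reductive; for connected $B$ this is a torus, so $\pi_0^{\A^1}(B)$ is a sheaf of abelian groups, and Morel's theorem that strongly $\A^1$-invariant abelian sheaves are strictly $\A^1$-invariant then yields the strict conclusion.

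The step I expect to be the main obstacle is the control of the non-torus factors over imperfect fields, which is precisely why perfectness of $k$ is imposed for the strict statement. Over an imperfect field the unipotent radical may fail to be $k$-split, so $\bar B$ genuinely retains a $k$-wound commutative unipotent part; such groups are $\A^1$-rigid, so the computation of $H^0$ survives, but establishing the $\A^1$-invariance of their higher étale cohomology, and reconciling it with the Artin--Schreier contributions to the cohomology of $\pi_0(B)$ at the Nisnevich level, is the delicate point. Over a perfect field this obstruction vanishes: $\bar B$ reduces to a torus for connected $B$, and the argument collapses to Lemmas~\ref{lemma torus} and~\ref{lemma nistoet} together with Morel's theorem.
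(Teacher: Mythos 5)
Your reduction in the case the paper actually needs later---$B$ connected with split unipotent radical, in particular connected solvable $B$ over a perfect field---is exactly the paper's argument: the paper strips the unipotent part in one line (``any solvable group $B$ is $\A^1$-weakly equivalent to its maximal torus''), and then concludes from Lemmas~\ref{lemma torus} and~\ref{lemma nistoet} together with Morel's theorem on strongly $\A^1$-invariant sheaves of abelian groups, just as you do. Your identification of $\pi_0^{\A^1}(B)$ with $\bar B = B/W$ via $\A^1$-rigidity is also correct, and is in fact more careful than the paper about what happens when the unipotent radical is not split.

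The genuine gap is in your $H^1$ step for the imperfect case, namely the assertion that in the \'etale long exact sequences ``the $k$-wound unipotent terms contribute trivially.'' This is false: $H^1_{\text{\'et}}(-,V)$ is \emph{not} $\A^1$-invariant for a wound unipotent group $V$. Concretely, take $k=\mathbb{F}_p(t)$ and let $V\subset \mathbb{G}_a^2$ be the wound form of $\mathbb{G}_a$ defined by $y^p = x-tx^p$. The additive homomorphism $\phi\colon \mathbb{G}_a^2\to \mathbb{G}_a$, $(x,y)\mapsto x-tx^p-y^p$, is a smooth surjection with kernel $V$, and since $H^1_{\text{\'et}}(\A^1_k,\mathbb{G}_a)=0$ one gets
\[
H^1_{\text{\'et}}(\A^1_k,V)\cong k[s]\big/\phi\bigl(k[s]^2\bigr),\qquad H^1_{\text{\'et}}(k,V)\cong k\big/\phi\bigl(k^2\bigr).
\]
The class of $s$ on the left is not in the image of the constants: if $x$ or $y$ is a nonconstant polynomial, then $\phi(x,y)$ has degree $p\max(\deg x,\deg y)\geq 2$, because cancellation of the two top coefficients $-ta^p$ and $-b^p$ would force $t=(-1)(b/a)^p\in k^p$ (note $-1\in k^p$ in characteristic $p$), contradicting $t\notin k^p$. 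Hence $H^1_{\text{\'et}}(\Spec k,V)\to H^1_{\text{\'et}}(\A^1_k,V)$ is not surjective, the diagram chase cannot close, and---worse---the whole route through Lemma~\ref{lemma nistoet} is unavailable for these pieces, since that lemma takes \'etale $\A^1$-invariance as its \emph{input}. The problem appears in pure form already for $B=V$ itself, a perfectly good solvable group over $k$: your argument correctly gives $\pi_0^{\A^1}(V)=V$ and $\A^1$-invariance of $H^0$, but leaves the required $\A^1$-invariance of $H^1_{\rm Nis}(-,V)$ unproved, with no \'etale statement to fall back on. (The Artin--Schreier contribution of $\pi_0(B)$ that you flag is a second instance of the same phenomenon, and it does not vanish merely because $k$ is perfect---one also needs $B$ connected.) To be fair, the paper's one-line reduction silently assumes the unipotent radical is split, i.e.\ exactly the connected/perfect situation used for Borel subgroups later; so your diagnosis of where the delicate point sits is correct, but the proposed \'etale diagram chase does not resolve it.
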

\begin{proof}
We first consider the special case of a torus $T$ over $k$.  Since $T$ is $\A^1$-rigid, $\pi_0^{\A^1}(T) = T$, which is strongly $\A^1$-invariant by Lemma \ref{lemma torus} and Lemma \ref{lemma nistoet}.  

Since any solvable group $B$ is $\A^1$-weakly equivalent to its maximal torus, the general case follows from the special case.  The final assertion follows from \cite[Theorem 5.46]{Morel-book}.
\end{proof}

We next turn our attention to quasi-split groups over perfect fields. Recall that a \emph{Borel subgroup} of an algebraic group $G$ over $k$ is a maximal Zariski closed and connected solvable algebraic subgroup.  A reductive group $G$ is said to be \emph{quasi-split} over $k$ if $G$ admits a Borel subgroup defined over $k$.  In the remainder of this section, we show that the sheaf of $\A^1$-connected components of a quasi-split group is strictly $\A^1$-invariant.  This is shown by obtaining the $\A^1$-connectedness of the generalized flag variety $G/B$ associated with a quasi-split group $G$.   

\begin{lemma}
\label{lemma unipotent}
For any connected unipotent group $G$ over a perfect field $k$ and a subgroup $H$ of $G$, the quotient $G/H$ is $\A^1$-contractible.
\end{lemma}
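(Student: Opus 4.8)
The plan is to argue by induction on $\dim G$, using the structure theory of split unipotent groups over a perfect field together with the fact that torsors under $\mathbb{G}_a$ are $\A^1$-weak equivalences. First I would record the standing reductions. Since $k$ is perfect, $G$ is a \emph{split} unipotent group, so I may replace $H$ by its reduced closed subgroup scheme (which is then smooth over $k$) without altering $G/H$, and $G/H$ is a smooth affine $k$-scheme, hence an object of $Sm_k$ to which $\A^1$-homotopy theory applies. The key structural input is that a nontrivial connected split unipotent group $G$ contains a \emph{central} subgroup $N \cong \mathbb{G}_a$ with $G/N$ again connected unipotent of dimension $\dim G - 1$. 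The base case $\dim G = 0$ is immediate: then $G$ and $H$ are trivial and $G/H = \Spec k$ is $\A^1$-contractible.

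For the inductive step I would fix such a central $N \cong \mathbb{G}_a$ and distinguish two cases according to whether $N \subseteq H$. If $N \subseteq H$, then since $N$ is normal in $G$ we have $G/H \cong (G/N)/(H/N)$, where $G/N$ is connected unipotent of strictly smaller dimension and $H/N$ is a closed subgroup; the induction hypothesis applied to $G/N$ then gives that $G/H$ is $\A^1$-contractible.

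If $N \not\subseteq H$, then because $N \cong \mathbb{G}_a$ has only finite proper closed subgroups, $N \cap H$ is finite, so $HN/H \cong N/(N \cap H)$ is a one-dimensional connected smooth unipotent group, hence isomorphic to $\mathbb{G}_a$ (again using that $k$ is perfect). As $N$ is central, $HN$ is a closed subgroup in which $H$ is normal, and the natural map $G/H \to G/(HN)$ is a torsor under $HN/H \cong \mathbb{G}_a$. Since $\mathbb{G}_a$ is special, this torsor is Zariski-locally trivial, and being a torsor under the trivial line bundle it is an affine bundle, hence an $\A^1$-weak equivalence by the Morel--Voevodsky comparison. Finally $G/(HN) \cong (G/N)/(HN/N)$ is $\A^1$-contractible by the induction hypothesis applied to $G/N$, and therefore so is $G/H$.

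The point requiring the most care is the characteristic-$p$ subtlety in the case $N \not\subseteq H$: one must check that $N \cap H$ is a genuine (possibly non-smooth) finite subgroup scheme, that the quotient $N/(N \cap H)$ is nonetheless $\mathbb{G}_a$, and that the resulting fppf torsor $G/H \to G/(HN)$ under the \emph{smooth} group $\mathbb{G}_a$ is Zariski-locally trivial so that the identification of affine bundles with $\A^1$-weak equivalences applies. The perfectness of $k$ is used repeatedly, both to guarantee that $G$ is split (so that the central $\mathbb{G}_a$ exists and the induction can proceed) and to pass freely between reduced and smooth subgroup structures.
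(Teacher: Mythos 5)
Your proof is correct in its essentials and follows the same basic strategy as the paper's: induction on the dimension of $G$, fibering $G/H$ over a smaller quotient by means of a central unipotent subgroup, with contractible fibers over a base that is contractible by induction. The differences lie in the granularity and the bookkeeping. The paper quotients by the identity component $Z(G)^{\circ}$ of the center all at once; this forces a preliminary step --- the commutative case, handled by a separate induction using Borel's theorem that commutative unipotent groups over a perfect field are iterated extensions of $\mathbb{G}_a$ --- and then invokes an $\A^1$-fiber sequence $Z(G)^{\circ}/(Z(G)^{\circ}\cap H) \to G/H \to \overline{G}/\overline{H}$. You instead peel off a single central $\mathbb{G}_a$ at a time (whose existence over a perfect field follows, as you indicate, from splitness together with nilpotency of unipotent groups), which eliminates the separate commutative case and replaces the fiber-sequence formalism by a concrete, fully justified mechanism: $G/H \to G/(HN)$ is a $\mathbb{G}_a$-torsor, Zariski-locally trivial by specialness of $\mathbb{G}_a$, hence an affine bundle and an $\A^1$-weak equivalence. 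Your route is somewhat longer but more self-contained, and it makes explicit the characteristic-$p$ points (non-smooth intersections $N \cap H$, and $\mathbb{G}_a$ modulo a finite subgroup scheme being again $\mathbb{G}_a$) that the paper's proof passes over silently.

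One opening claim should be deleted: the assertion that one may replace $H$ by its reduced subgroup scheme ``without altering $G/H$'' is not justified, and in characteristic $p$ the natural map $G/H_{\rm red} \to G/H$ is a purely inseparable morphism that is typically not an isomorphism --- already for $G = \mathbb{G}_a$ and $H = \alpha_p$ it is the relative Frobenius. Fortunately this reduction is never used: your two-case argument works verbatim for an arbitrary closed subgroup scheme $H$ (as you implicitly acknowledge when you note that $N \cap H$ may fail to be smooth even when $H$ is smooth), and $G/H$ is automatically a smooth affine $k$-scheme in any case, being a geometrically reduced homogeneous space under a smooth unipotent group. So simply drop that reduction and the proof stands.
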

\begin{proof}
We first observe that the statement clearly holds for $\#G_a$. Over a perfect field, any commutative unipotent group is obtained by successively taking extensions by $\#G_a$ \cite[Theorem 15.4, Corollary 15.5]{Borel}. By induction, the statement of the lemma holds for any commutative unipotent group.  

For the general case, we use induction on the dimension of the group. Let $G$ be any connected unipotent group and let $Z(G)^{\circ}$ denote the identity component of its center.  Since $Z(G)^{\circ}$ is commutative and unipotent, it follows that $Z(G)^{\circ}/Z(G)^{\circ} \cap H$ is $\A^1$-contractible by the special case.  Hence, we have an $\A^1$-fiber sequence
\[
Z(G)^{\circ}/Z(G)^{\circ} \cap H - G/H \to \-G/\-H, 
\]
where $\-G = G/Z(G)^{\circ}$ and $\-H=H/Z(G)^{\circ} \cap H$.  By induction hypothesis, $\-G/\-H$ is $\A^1$-contractible and consequently, so is $G/H$.
\end{proof}

\begin{theorem}
\label{theorem quasi-split}
Let $k$ be a perfect field and let $G$ be a quasi-split algebraic group over $k$.  Fix a Borel subgroup $B$ of $G$ and a maximal torus $T$ of $G$ contained in $B$.  Then $G/B$ is $\#A^1$-connected.  
\end{theorem}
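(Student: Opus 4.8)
The plan is to realize $G/B$ as a scheme assembled from $\A^1$-contractible Bruhat cells, glued according to the closure relations of the Bruhat order, and then to feed this cellular structure into the geometric criterion of \cite{Balwe-Hogadi-Sawant-JAG}. Fix the base point $o = eB \in (G/B)(k)$, a maximal $k$-split torus $S \subseteq T$, the relative root system $\Phi(G,S)$, and the relative Weyl group ${}_kW = N_G(S)/Z_G(S)$, whose elements admit representatives $\dot w \in N_G(S)(k) \subseteq G(k)$ precisely because $G$ is quasi-split (isotropic).

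First, I would set up the relative Bruhat decomposition $G/B = \bigsqcup_w C_w$, indexed by ${}_kW$, where the cell $C_w = B\dot w B/B$ is isomorphic as a $k$-variety to the unipotent group $U_w := U \cap \dot w\, U^- \dot w^{-1}$, with $U = R_u(B)$ and $U^-$ the unipotent radical of the opposite Borel $B^-$. The crucial point is that each $U_w$ is a connected unipotent group over $k$, built from the relative root groups, which for a quasi-split group appear as Weil restrictions $R_{L/k}\mathbb{G}_a$ (or their non-commutative rank-one analogues when $2a \in \Phi$). Since $k$ is perfect, Lemma \ref{lemma unipotent}, applied with trivial $H$, shows that every cell $C_w$ is $\A^1$-contractible; in particular the big cell $\Omega = U^- \cdot o$ is a dense open $\A^1$-contractible subscheme, while the smallest cell is the $k$-point $o$.

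Next I would produce, over an arbitrary finitely generated separable extension $F/k$, chains of affine lines connecting any point of $(G/B)(F)$ to $o$. The mechanism is the passage between adjacent cells $C_w$ and $C_{ws_a}$ afforded by the rank-one subgroup $G_{(a)} \subseteq G$ attached to a relative simple root $a$: this is a quasi-split group of relative rank one (of type $R_{L/k}SL_2$, or a quasi-split special unitary group when $2a \in \Phi$), and the corresponding rank-one flag variety $G_{(a)}/(G_{(a)} \cap B)$ embeds in $G/B$. Such a rank-one flag variety is covered by two $\A^1$-contractible big cells meeting in a dense nonempty open, hence is $\A^1$-chain connected; chaining these along a reduced word for $w$ in ${}_kW$ connects $o$ to $\dot w o$, and combined with the contractibility of each cell this joins every $F$-point to $o$ by a chain of affine lines defined over $F$. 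The geometric criterion of \cite{Balwe-Hogadi-Sawant-JAG} then upgrades this chain connectivity to the assertion that $\pi_0^{\A^1}(G/B) = \Spec k$, i.e. that $G/B$ is $\A^1$-connected.

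The main obstacle is exactly the non-split structure. For a split group the cells are honest affine spaces and the classical cellular/rational structure makes the conclusion nearly formal; for a quasi-split group the cells and the connecting rank-one subgroups are defined only after a field extension and present themselves as Weil restrictions, so one cannot simply invoke ``affine space''. Two points must be overcome: (i) establishing $\A^1$-contractibility of the twisted cells, which is precisely what Lemma \ref{lemma unipotent} buys us over a perfect field; and (ii) ensuring that the connecting affine lines between cells can be chosen over the given base field $F$ rather than over a splitting field, which is guaranteed by working with the relative root groups together with the $k$-rational Weyl representatives $\dot w$ furnished by quasi-splitness. Checking that the resulting data satisfy the precise hypotheses of the geometric criterion of \cite{Balwe-Hogadi-Sawant-JAG} is the technical heart of the argument.
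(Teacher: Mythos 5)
Your route can be made to work, but it is genuinely different from — and much heavier than — the paper's argument, and two of its steps need repair. The paper's proof uses \emph{only} the big cell: since $G$ is quasi-split and $k$ is perfect, the open cell $U = Bw_0B/B$ is dense in $G/B$ and is a quotient of the unipotent group $R_uB$ by a subgroup, hence $\A^1$-contractible by Lemma \ref{lemma unipotent}; consequently the generic point of $G/B$ already maps to the class of a $k$-rational point in $\pi_0^{\A^1}(G/B)(k(G/B))$, and the criterion of \cite{Balwe-Hogadi-Sawant-JAG} — which is a statement about the generic point alone — immediately yields $\A^1$-connectedness. You instead set out to connect \emph{every} $F$-point to the base point for \emph{every} finitely generated separable $F/k$, via the full relative Bruhat decomposition, reduced words and rank-one subgroups. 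That is the classical ``$R$-triviality'' style argument; it can be carried out, and the criterion you invoke (chain connectivity over all such $F$ implies $\A^1$-connectedness) is also available from \cite{Balwe-Hogadi-Sawant-JAG}, but the entire cell-by-cell chaining apparatus is unnecessary once one notices that the criterion only ever sees the generic point, which lives in the single dense $\A^1$-contractible cell.

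If you do keep your route, two steps must be fixed. First, the asserted scheme-level decomposition $G/B = \bigsqcup_{w \in {}_kW} C_w$ is false for non-split quasi-split $G$: absolute Bruhat cells whose Weyl element is not Galois-fixed do not appear among the $C_w$, and they descend to $k$ only in Galois orbits, as strata isomorphic to affine spaces over nontrivial finite separable extensions $L/k$ (this is exactly the ``naive cellular structure'' on $G/B$ described in the paper). What your argument actually needs is the point-level statement that every $F$-point lies in some relative cell — but then the indexing group must be ${}_FW$, which grows with $F$, so the decomposition has to be re-taken over each $F$ rather than fixed over $k$. Second, the inference ``two $\A^1$-contractible big cells meeting in a dense nonempty open, hence $\A^1$-chain connected'' is not valid as stated: chain connectivity requires intermediate points that are $F$-rational, and over a finite field a dense open need not have any. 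Here this is repairable — the intersection of the two opposite big cells of the rank-one flag variety contains the $F$-points $u^-o$ for any $e \neq u^- \in U_a^-(F)$, so one can pass from $o$ to $\dot s_a o$ through such a point by two explicit lines — but the repair has to be made, since $F$ ranges over finite extensions of $k$ when $k$ is a finite field.
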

\begin{proof}
Let $W = N_G(T)/T$ denote the absolute Weyl group of $G$, which is a finite \'etale group scheme over $k$.
% Consider the Bruhat decomposition
% \[
% G/B = \coprod_{w \in W} BwB/B
% \] 
Let $U=Bw_0B/B$ denote the big cell in the quotient $G/B$ of $G$ by the left action of $B$, where $w_0$ denotes the unique longest element of the Weyl group $W$ of $G$. Since $G$ is quasi-split and $k$ is perfect, the element $w_0$ is defined over $k$ and $U$ is a dense open subscheme of $G/B$. 

The Borel subgroup $B$ is the semidirect product of the centralizer $Z_G(T)$ of $T$ in $G$ and the unipotent radical $R_uB$ of $B$.  The unipotent group $R_uB$ acts transitively on the big cell $U$.  Thus, as a variety over $k$, $U$ is isomorphic to a quotient of the unipotent group $R_uB$ by a subgroup.  By Lemma \ref{lemma unipotent}, $U$ is $\A^1$-connected.  
Then the generic point of $U$ maps to a $k$-rational point of $U$ in $\pi_0^{\A^1}(U)(k(U))$.  But this implies that the generic point of $G/B$ maps to a $k$-rational point of $G/B$ in $\pi_0^{\A^1}(G/B)(k(G/B))$, $U$ being an open subscheme of $G/B$.  Thus, we conclude by \cite[Theorem 2]{Balwe-Hogadi-Sawant-JAG} that $G/B$ is $\A^1$-connected. This completes the proof of the theorem.
\end{proof}

\begin{corollary}
With the notations of Theorem \ref{theorem quasi-split}, $\pi_0^{\A^1}(G)$ is a strictly $\A^1$-invariant sheaf.
\end{corollary}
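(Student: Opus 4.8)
The plan is to extract $\pi_0^{\A^1}(G)$ from the long exact sequence of $\A^1$-homotopy sheaves attached to the $\A^1$-fiber sequence $B \to G \to G/B$, and to recognise it as a cokernel inside the abelian category $Ab_{\A^1}(k)$ of strictly $\A^1$-invariant sheaves. The $\A^1$-connectedness of $G/B$ established in Theorem \ref{theorem quasi-split} is exactly what makes this possible.

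First I would write down the long exact sequence \eqref{eqn intro exact sequence}, whose relevant segment is
$$\pi_1^{\A^1}(G/B) \xrightarrow{f} \pi_0^{\A^1}(B) \to \pi_0^{\A^1}(G) \to \pi_0^{\A^1}(G/B).$$
By Theorem \ref{theorem quasi-split} the right-hand term is trivial, and by Proposition \ref{prop solvable} applied to the solvable group $B$ we have $\pi_0^{\A^1}(B) = T$. Hence the sequence collapses to an exact sequence
$$\pi_1^{\A^1}(G/B) \xrightarrow{f} T \to \pi_0^{\A^1}(G) \to \ast,$$
which exhibits $T \to \pi_0^{\A^1}(G)$ as an epimorphism of Nisnevich sheaves of groups whose source is abelian; passing to stalks, the image is a quotient of an abelian group, so $\pi_0^{\A^1}(G)$ is a sheaf of abelian groups.

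It then remains to prove strict $\A^1$-invariance of the quotient $T/\im(f)$. Since $T$ is abelian, $f$ factors through the abelianisation of $\pi_1^{\A^1}(G/B)$; by the $\A^1$-Hurewicz theorem (available because $G/B$ is $\A^1$-connected) this abelianisation is the $\A^1$-homology sheaf $\HA_1(G/B)$, which lies in $Ab_{\A^1}(k)$ by construction. I would therefore replace the exact sequence above by
$$\HA_1(G/B) \to T \to \pi_0^{\A^1}(G) \to 0$$
with both left-hand terms in $Ab_{\A^1}(k)$. As $Ab_{\A^1}(k)$ is an abelian category (being the heart of the homological $t$-structure on the $\A^1$-derived category), the cokernel of this map computed there is strictly $\A^1$-invariant and coincides with the sheaf-theoretic cokernel $\pi_0^{\A^1}(G)$. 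This yields the corollary, the strict $\A^1$-invariance of $T$ itself being Proposition \ref{prop solvable}.

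The main obstacle I anticipate is the careful bookkeeping at the $\pi_0$-end of the long exact sequence, where the maps are a priori only morphisms of pointed sheaves. One must verify that the connecting map $f$ is genuinely a homomorphism of sheaves of groups and that the identification $\pi_0^{\A^1}(G) \cong T/\im(f)$ holds as sheaves of abelian groups, so that \emph{image} and \emph{cokernel} may legitimately be computed in $Ab_{\A^1}(k)$. The commutativity of $T$ is the feature that tames the $\pi_1^{\A^1}(G/B)$-action on $\pi_0^{\A^1}(B)$ and licenses the abelianisation step; once these compatibilities are in place, the strict-invariance conclusion is formal.
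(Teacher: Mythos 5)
Your proposal is correct and is essentially the paper's own proof: both extract $\pi_0^{\A^1}(G)$ as the quotient of $T = \pi_0^{\A^1}(B)$ (Proposition \ref{prop solvable}) by the image of $\pi_1^{\A^1}(G/B)$ from the long exact sequence of the $\A^1$-principal $B$-fibration $B \to G \to G/B$, use Theorem \ref{theorem quasi-split} to kill the term $\pi_0^{\A^1}(G/B)$, replace $\pi_1^{\A^1}(G/B)$ by $\HA_1(G/B)$ via the $\A^1$-Hurewicz theorem, and conclude because $Ab_{\A^1}(k)$ is an abelian category. If anything, your factorization of the connecting map through $\HA_1(G/B)$ via its universal property among strictly $\A^1$-invariant targets is slightly more careful than the paper's bald assertion of an isomorphism $\pi_1^{\A^1}(G/B) \cong \HA_1(G/B)$ (which need not hold literally, since $\pi_1^{\A^1}$ of a flag variety can be non-abelian, e.g.\ for $\P^1$), though the image in $T$, and hence the conclusion, is the same either way.
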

\begin{proof}
Note that $\pi_0^{\A^1}(B) = T$ is strictly $\A^1$-invariant by Proposition \ref{prop solvable}.  Hence, by \cite[Thm 6.50]{Morel-book}, the sequence 
\[
B \to G \to G/B 
\]
is an $\A^1$-homotopy principal $B$-fibration. This implies the following long exact sequence of pointed Nisnevich sheaves of groups
\[
\cdots \to \pi_1^{\A^1}(G/B) \to \pi_0^{\A^1}(B)=T \to \pi_0^{\A^1}(G) \to \pi_0^{\A^1}(G/B).
\]
By Theorem \ref{theorem quasi-split}, it follows that $\pi_0^{\A^1}(G)$ is a quotient of $T$ by the image of $\pi_1^{\A^1}(G/B) \to \pi_0^{\A^1}(B)=T$ and that we have an isomorphism $\pi_1^{\A^1}(G/B) \cong \HA_1(G/B)$. Since $\HA_1(G/B)$ is a strictly $\A^1$-invariant sheaf and the category of strictly $\A^1$-invariant sheaves is abelian, it follows that $\pi_0^{\A^1}(G)$ is a strictly $\A^1$-invariant sheaf.

\end{proof}

% \subsection{Consequences for \texorpdfstring{$R$}{R}-equivalence classes} \hfill
% 
% \begin{proposition}
% \label{prop quasi-split R-eq}
% Let $k$ be a perfect field and let $G$ be a quasi-split, semisimple, simply connected algebraic group over $k$.  Then $G(k)/R$ is an abelian group.
% \end{proposition}
% \begin{proof}
% The proof of Theorem \ref{theorem quasi-split} shows that $\pi_0^{\A^1}(G)$ is the quotient of the Nisnevich sheaf represented by any maximal torus $T$ of $G$.  Since $T$ is a sheaf of abelian groups, so is $\pi_0^{\A^1}(G)$ and the result now follows from \cite[Theorem 3.6]{Balwe-Sawant-reductive}.
% \end{proof}

\section{Transfers on cellular \texorpdfstring{$\A^1$}{A1}-homology}
\label{section cellular transfers}

In this section, we show that for a large class of smooth schemes with a cellular structure, the cellular $\#A^1$-chain complex studied in \cite{Morel-Sawant-pi1} has geometric transfers in the sense of \cite{Morel-book}. As a consequence, we will show that the sheaf of $\#A^1$-fundamental groups of the flag variety of a quasi-split group admits geometric transfers, which will be a key tool in the proof of our main theorem.

\begin{definition}
\label{defn naively cellular}
Let $k$ be a field and let $X \in Sm_k$ be a smooth $k$-scheme. A \emph{naive cellular structure} on $X$ consists of an increasing filtration 
\begin{equation}
\label{eqn cellular}
\emptyset = \Omega_{-1} \subsetneq \Omega_0 \subsetneq \Omega_1 \subsetneq \cdots \subsetneq \Omega_s = X 
\end{equation}
by open subschemes such that for each $i\in\{0,\dots,s\}$, the reduced induced closed subscheme $X_i:= \Omega_i - \Omega_{i-1}$ of $\Omega_i$ is either 
\begin{itemize}
\item empty, or
\item everywhere of codimension $i$ and a disjoint union of products of $\#A^1_{L_{ij}}$'s and $({\#G_m})_{L_{ij}}$'s, for some finite separable field extensions $L_{ij}$'s of $k$.
\end{itemize}
We call $X$ \emph{naively cellular} if $X$ is endowed with a naive cellular structure.
\end{definition}

\begin{example}
Let $G$ be a split group over a field $k$ with a fixed Borel subgroup $B$ and a maximal torus $T$ contained in $B$. Then $G$, as well as the flag variety $G/B$ and the quotient $G/T$ are examples of naively cellular schemes in the sense of Definition \ref{defn naively cellular}, where $L_{ij} = k$, for all $i, j$.  The naive cellular structure is induced by the Bruhat decomposition. 
\end{example}

\begin{example}
Let $G$ be a quasi-split group over a field $k$ with a fixed Borel subgroup $B$ (defined over $k$) and a maximal torus $T$ contained in $B$. The flag variety $G/B$ is an example of a naively cellular scheme.  The naive cellular structure in this case is induced by the absolute Bruhat decomposition by considering the scheme-theoretic Bruhat decomposition of the flag variety $(G/B)_{\bar{k}}$ of the split group $G_{\bar{k}}$ and by taking the fixed points under the action of $\Gal(\bar{k}/k)$. Note that in this case the Weyl group $W = N_G(T)/T$ is a finite \'etale group scheme.  Hence, the Bruhat cells in $(G/B)_{\bar{k}}$, which are isomorphic to affine spaces over $\bar{k}$ need not descend to give affine spaces over $k$ as locally closed strata. However, the strata are isomorphic to affine spaces over a finite separable field extension of $k$. We will use this naive cellular structure in the proof of Theorem \ref{theorem transfers}.
\end{example}

Let $X$ be a naively cellular smooth scheme over $k$ admitting a filtration as in \eqref{eqn cellular}. Write $X_i = \Omega_i - \Omega_{i-1} = \coprod_j Y_{ij}$, where each $Y_{ij}$ is a product of $\#A^1$'s and $({\#G_m})$'s, possibly defined over finite separable field extensions of $k$. Then following \cite[Section 2.3]{Morel-Sawant-pi1}, the cellular $\#A^1$-chain complex of $X$ is given by
\[
\Ccell_*(X): \cdots \to \HAred_i(\Omega_i/\Omega_{i-1}) \xrightarrow{\partial_i} \to  \HAred_{i-1}(\Omega_{i-1}/\Omega_{i-2}) \cdots, 
\]
where $\HAred$ denotes the reduced $\#A^1$-homology groups and the differential $\partial_i$ is defined to be the composite
\[
\HAred_i(\Omega_i/\Omega_{i-1}) \xrightarrow{\delta} \HAred_{i-1}(\Omega_{i-1}) \xrightarrow{\iota} \HAred_{i-1}(\Omega_{i-1}/\Omega_{i-2}),
\]
where $\delta$ is the connecting map in the long exact sequence of $\#A^1$-homology groups associated with the cofiber sequence $\Omega_{i-1} \to \Omega_i \to \Omega_i/\Omega_{i-1}$ and $\iota$ is induced by the quotient morphism $\Omega_{i-1} \to \Omega_{i-1}/\Omega_{i-2}$.  Note that by hypothesis, each $Y_{ij}$ is cohomologically trivial in the sense of \cite[Definition 2.9, Remark 2.10]{Morel-Sawant-pi1}. By \cite[Lemma 2.13]{Morel-Sawant-pi1}, the normal bundle of the inclusion of each $Y_{ij}$ is trivial. Hence, as an application of the motivic homotopy purity theorem \cite[Theorem 2.23, page 115]{Morel-Voevodsky}, we have the identification
\[
\HAred_i(\Omega_i/\Omega_{i-1})  \cong \underset{j}{\oplus} ~ \HAred_i(S^i \wedge \#G_m^{\wedge i} \wedge (Y_{ij})_+) \cong \underset{j}{\oplus} ~ \KMW_i \otimes \ZA[Y_{ij}].
\] 
By hypothesis, each $Y_{ij}$ is $\#A^1$-weakly equivalent to $T_{ij}:= ({\#G_m})_{L_{ij}}^{\times r_{ij}}$ for some nonnegative integer $r_{ij}$ and some finite separable extension $L_{ij}$ of $k$.  Let us denote the symmetric monoidal structure on $Ab_{\#A^1}(k)$ by $\otimes$.  Recall from \cite[Section 3.3]{Morel-Sawant-pi1} that there is an isomorphism of sheaves 
\begin{equation}
\label{eqn ZT decomposition}
\ZA[T_{ij}]  \cong \ZA[\#G_m]^{\otimes r_{ij}}\otimes \Z[\Spec L_{ij}] = \Z[\Spec L_{ij}] \oplus \left( \underset{1\leq i_1<\dots<i_s\leq r_{ij}}{\oplus} \KMW_s \otimes \Z[\Spec L_{ij}]\right).
\end{equation}
By \cite{Morel-Sawant-WR}, one has an isomorphism
\begin{equation}
\label{eqn Weil restriction}
\KMW_n \otimes \Z[\Spec L] \cong {f_L}_*f_L^* \KMW_n, 
\end{equation}
for every $n \in \Z$, and any finite field extension $L$ of $k$, where $f_L: \Spec L \to \Spec k$ is the structure morphism.

\begin{notation}
For the sake of brevity, we write ${\KMW_n}^{[L]}: = {f_L}_*f_L^* \KMW_n$.
\end{notation}

Given a monogenic extension $F \subset E$ of finitely generated, separable field extensions of $k$ along with a generator $\alpha \in E$ of $E$ over $F$, we have a geometric transfer map \cite[page 98]{Morel-book}
\[
tr^{\alpha}_n: \Ccell_n(X)(E) \to \Ccell_n(X)(F),
\]
for every integer $n$. Composing suitable geometric transfer maps, we get field transfers on the terms of $\Ccell_*(X)$.

\begin{definition}
\label{defn field transfers}
Let $M \in Ab_{\#A^1}(k)$. Let $F \subset E$ be an extension of finitely generated, separable field extensions of $k$. Choose a sequence of field extensions
\[
F = E_0 \subset E_1 \subset \cdots E_r = E, 
\]
where each $E_i/E_{i-1}$ is a monogenic extension and choose a generator $\alpha_i$ of $E_i$ over $E_{i-1}$. By a \emph{field transfer} on $M_{-1}$, we mean a composite of geometric transfers 
\[
M_{-1}(E) \xrightarrow{tr^{\alpha_r}} M_{-1}(E_{r-1}) \to \cdots \to M_{-1}(E_1) \xrightarrow{tr^{\alpha_1}} M_{-1}(F). 
\]
Note that a field transfer depends upon the choices of the tower of monogenic extensions and the generators.
\end{definition}

Now, assume that $X$ is naively cellular with the additional condition that $L_{ij} = k$, for all $i, j$ in Definition \ref{defn naively cellular}. In this case, it follows that every term of the cellular $\#A^1$-chain complex $\Ccell_*(X)$ of $X$ is a direct sum of the sheaves $0$, $\#Z$ and the unramified Milnor-Witt $K$-theory sheaves $\KMW_n$'s. Recall that for every $M \in Ab_{\#A^1}(k)$ and $n \in \#Z$, we have the identification
\begin{equation}
\label{eqn Hom from KMW}
\Hom_{Ab_{\#A^1}(k)}(\KMW_n, M) = M_{-n}(k).
\end{equation}
Using this, it follows that the components of the differentials of $\Ccell_*(X)$ are either $0$, or elements of $\#Z$ or $\KMW_n(k)$ for some $n \in \#Z$. More precisely, the components of the form $\KMW_n \to \KMW_{n-1}$ for $n > 1$ are of the form $\eta$ multiplied by an element of $\KMW_0(k) = GW(k)$.  

The above discussion along with the projection formula \cite[Remark 4.30]{Morel-book} shows that field transfers commute with the differentials of $\Ccell_*(X)$ in this special case and hence, induce field transfers on the cellular $\#A^1$-homology groups. We record this observation below.

\begin{theorem}
\label{theorem cellular transfers}
Let $X$ be a naively cellular smooth scheme over a perfect field $k$ with the additional condition that $L_{ij} = k$, for all $i, j$ in Definition \ref{defn naively cellular}.  Then its cellular $\A^1$-chain complex $\Ccell_*(X)$ admits field transfers, which respect differentials.  Consequently, the cellular $\A^1$-homology sheaves admit field transfers. 
\end{theorem}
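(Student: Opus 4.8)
The plan is to assemble the observations recorded in the discussion preceding the statement into three steps: equip the terms with field transfers, verify that these are chain maps, and pass to homology. First I would note that, under the hypothesis $L_{ij} = k$, every term $\Ccell_n(X)$ is a finite direct sum of copies of the sheaves $0$, $\Z$ and the Milnor--Witt $K$-theory sheaves $\KMW_m$. Each such summand carries Morel's geometric transfer maps $tr^\alpha$ along monogenic extensions \cite[page 98]{Morel-book}; composing these along a tower of monogenic extensions as in Definition \ref{defn field transfers}, and taking direct sums, equips each $\Ccell_n(X)$ with a field transfer $\Ccell_n(X)(E) \to \Ccell_n(X)(F)$ for every extension $F \subset E$ of finitely generated, separable field extensions of $k$.

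Next I would verify that these field transfers commute with the differentials $\partial_n$. Using the identification $\Hom_{Ab_{\A^1}(k)}(\KMW_m, M) = M_{-m}(k)$ from \eqref{eqn Hom from KMW}, I would read off the matrix of $\partial_n$ with respect to the direct sum decompositions of source and target. Each component is then either zero, or multiplication by an element of $\Z$, or an element of $\KMW_n(k)$ for some $n$; concretely, a component of the form $\KMW_m \to \KMW_{m-1}$ is multiplication by an element of the shape $\eta \cdot c$ with $c \in GW(k) = \KMW_0(k)$. The decisive feature is that in every case the defining scalar is pulled back from the base field $k$, so it lies in the image of the restriction $\KMW_*(k) \to \KMW_*(F)$ for any intermediate field $F$. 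The commutativity then follows from the projection formula \cite[Remark 4.30]{Morel-book}, namely the $\KMW_*(F)$-linearity of the geometric transfer: for a scalar $a$ coming from $k$ and a section $b$ over $E$ one has $tr^\alpha(a \cdot b) = a \cdot tr^\alpha(b)$. Applying this to each component shows that every geometric transfer, hence every composite field transfer, commutes with $\partial_n$. Since the field transfers are thus chain maps, they induce maps on the subquotients $\ker \partial_n / \im \partial_{n+1}$, yielding the asserted field transfers on the cellular $\A^1$-homology sheaves.

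I expect the main obstacle to lie in the explicit identification of the components of $\partial_n$ as multiplication by scalars pulled back from $k$, together with the correct bookkeeping in the projection formula; once this is in hand the argument is formal. The subtle point is to ensure that no component of a differential involves an operation outside the scope of the projection formula --- for instance a genuinely base-field-dependent or \emph{ramified} map --- which is precisely why the restrictive hypothesis $L_{ij} = k$ is essential here. In the general case with nontrivial $L_{ij}$ the relevant terms are Weil restrictions $\KMW_n \otimes \Z[\Spec L_{ij}]$, and the commutativity of the differentials with transfers requires a further analysis of transfers on these Weil restrictions rather than the direct projection-formula argument used above.
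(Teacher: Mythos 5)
Your proposal is correct and follows essentially the same route as the paper: the paper's argument is precisely the discussion preceding the theorem, which decomposes each term into copies of $0$, $\Z$ and $\KMW_n$, identifies the components of the differentials as scalars over $k$ via \eqref{eqn Hom from KMW}, and invokes the projection formula \cite[Remark 4.30]{Morel-book} to see that field transfers commute with the differentials and hence descend to homology. Your closing remark about why the hypothesis $L_{ij}=k$ is needed (the Weil restriction case requiring the separate analysis of Lemma \ref{lemma WR transfers}) also matches the paper's structure.
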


\begin{corollary}
\label{corollary G/T}
Let $G$ be a split group over a perfect field $k$. Fix a Borel subgroup $B$ of $G$. Then the sheaves $\Hcell_n(G)$, $\Hcell_n(G/B)$ admit field transfers for all $n \in \#N$.
\end{corollary}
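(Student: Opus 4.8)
The plan is to verify that $G$, $G/B$, and (implicitly) $G/T$ fall under the hypotheses of Theorem \ref{theorem cellular transfers}, so that the existence of field transfers on the cellular $\A^1$-homology sheaves $\Hcell_n(G)$ and $\Hcell_n(G/B)$ follows immediately. Since Theorem \ref{theorem cellular transfers} applies to any naively cellular smooth scheme over a perfect field satisfying the additional hypothesis $L_{ij} = k$ for all $i,j$, the entire task reduces to exhibiting such a naive cellular structure on $G$ and on $G/B$.

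First I would invoke the Bruhat decomposition. For a \emph{split} group $G$ over $k$ with fixed Borel $B$ and maximal torus $T \subset B$, the Weyl group $W = N_G(T)/T$ is a \emph{constant} finite group scheme, so every Schubert cell $BwB/B$ in $G/B$ is defined over $k$ and is isomorphic to an affine space $\A^{\ell(w)}_k$ \emph{over $k$}, of codimension equal to $\ell(w_0) - \ell(w)$. Ordering the cells by dimension, one obtains the open filtration $\emptyset = \Omega_{-1} \subsetneq \Omega_0 \subsetneq \cdots \subsetneq \Omega_s = G/B$ required in \eqref{eqn cellular}, where each stratum $X_i = \Omega_i - \Omega_{i-1}$ is a disjoint union of affine spaces over $k$, each everywhere of codimension $i$. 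In particular every base field $L_{ij}$ equals $k$, which is precisely the additional condition required in Theorem \ref{theorem cellular transfers}. This was already noted as a worked example in the excerpt. For $G$ itself, the analogous naive cellular structure comes from the Bruhat decomposition $G = \coprod_{w \in W} BwB$, again with every cell a product of affine spaces and copies of $\#G_m$ over $k$ (the torus factor contributing the $\#G_m$'s); since $G$ is split, all strata are defined over $k$.

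Having exhibited these naive cellular structures, I would then apply Theorem \ref{theorem cellular transfers} directly to each of $G$ and $G/B$: it states that the cellular $\A^1$-chain complex $\Ccell_*(X)$ admits field transfers respecting differentials, and hence so do the homology sheaves $\Hcell_n(X)$. Applying this with $X = G$ and $X = G/B$ yields field transfers on $\Hcell_n(G)$ and $\Hcell_n(G/B)$ for all $n \in \#N$, completing the proof.

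I expect the only genuine point requiring care—and hence the main obstacle, though a mild one—to be the verification that the strata of the chosen filtration are \emph{everywhere} of the codimension prescribed by their index $i$ and are genuinely affine spaces (or products with $\#G_m$'s) over $k$ rather than over a nontrivial extension. This is exactly where the \emph{split} hypothesis on $G$ is used: it guarantees $W$ is constant and that each Bruhat cell descends to $k$, so that $L_{ij} = k$. Once this is in place the result is a formal consequence of Theorem \ref{theorem cellular transfers}; no further analysis of the differentials or of the transfer maps is needed here, since all of that work is absorbed into the cited theorem.
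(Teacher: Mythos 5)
Your proposal is correct and takes essentially the same approach as the paper: both exhibit the Bruhat-decomposition naive cellular structure on $G$ and $G/B$ with all $L_{ij}=k$ (recorded in the paper as an Example) and then apply Theorem \ref{theorem cellular transfers} to get field transfers on $\Ccell_*$ and hence on $\Hcell_n$. The paper's own proof also invokes Theorem \ref{theorem quasi-split} and the Hurewicz identification $\pi_1^{\A^1}(G/B)\cong\HA_1(G/B)\cong\Hcell_1(G/B)$, but that extra step serves the later application (Theorem \ref{theorem transfers}) rather than the statement itself, so your omission of it is harmless.
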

\begin{proof}
Under the hypotheses on $G$, we have shown in Theorem \ref{theorem quasi-split} that $G/B$ is $\#A^1$-connected. Hence, by \cite{Morel-Sawant-pi1}, we have isomorphisms
\[
\pi_1^{\A^1}(G/B) \cong \HA_1(G/B) \cong \Hcell_1(G/B).
\]
Therefore, the corollary follows from Theorem \ref{theorem cellular transfers}.
\end{proof}

\section{Transfers on the \texorpdfstring{$\A^1$}{A1}-connected components of quasi-split groups}

Recall that a \emph{sheaf with (Voevodsky) transfers} over a field $k$ is a contravariant functor from the category of finite correspondences over $k$ to the category of abelian groups (see \cite[Chapter 2]{MVW}). The aim of this section is to give a proof of the main theorem (Theorem \ref{theorem intro main} = Theorem \ref{theorem transfers}), which says that the sheaf of $\A^1$-connected components of a quasi-split group over a perfect field is a presheaf with transfers. 

\begin{lemma}
\label{lemma torus subsheaf} 
Let $T$ be a torus over a field $k$. Then any strictly $\A^1$-invariant Nisnevich subsheaf $\@F$ of $T$, which is stable under field transfers, is stable under transfers.
\end{lemma}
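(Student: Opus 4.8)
The plan is to reduce the full Voevodsky transfer structure to the field transfers that $\@F$ is already assumed to respect, exploiting the fact that strictly $\A^1$-invariant sheaves are unramified and hence detected at generic points. It suffices to treat a smooth irreducible base $X$ with generic point $\eta = \Spec k(X)$ (the general case follows by decomposing into connected components) and to show that for every finite correspondence $\alpha \in \Corr(X,Y)$ the induced map $\alpha^*\colon T(Y) \to T(X)$ carries $\@F(Y)$ into $\@F(X)$; by additivity in $\alpha$ we may further assume $\alpha = [Z]$ for an integral $Z \subseteq X \times Y$ finite and surjective over $X$.

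First I would record the key reformulation: for any smooth irreducible $X$,
\[
\@F(X) = T(X) \cap \@F(k(X)) \quad \text{inside } T(k(X)).
\]
The inclusion $\subseteq$ is clear since $\@F$ is a subsheaf. For the converse, recall that every strictly $\A^1$-invariant sheaf $M$ is unramified, so $M(X) = \bigcap_{x \in X^{(1)}} M(\mathcal O_{X,x})$ inside $M(k(X))$, and each $M(\mathcal O_{X,x})$ is exactly the kernel of the residue map $\partial_x\colon M(k(X)) \to M_{-1}(\kappa(x))$. Because $\@F \hookrightarrow T$ is a monomorphism of strictly $\A^1$-invariant sheaves and the contraction functor $(-)_{-1}$ is exact, the induced map $\@F_{-1}(\kappa(x)) \to T_{-1}(\kappa(x))$ remains injective and the residue maps for $\@F$ and $T$ are compatible. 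Hence if $s \in T(X)$ has $s_\eta \in \@F(k(X))$, then $\partial_x^{\@F}(s_\eta)$ maps to $\partial_x^{T}(s_\eta) = 0$, forcing $\partial_x^{\@F}(s_\eta) = 0$ for all $x$; thus $s_\eta \in \@F(\mathcal O_{X,x})$ for every $x$, so $s_\eta \in \@F(X)$.

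Granting this, since $\alpha^* s \in T(X)$ holds automatically as $T$ has transfers, it remains to check that $(\alpha^* s)_\eta \in \@F(k(X))$ for $s \in \@F(Y)$. Base changing the prime correspondence to $\eta$, the cycle $Z_\eta$ decomposes as a finite sum of closed points, each of the form $\Spec E_i$ for a finite extension $E_i/k(X)$ equipped with a $k$-morphism $\phi_i\colon \Spec E_i \to Y$. By compatibility of Voevodsky transfers with base change for finite correspondences, together with the identification of the transfer on $T$ over such field extensions, the restriction $(\alpha^* s)_\eta$ equals $\sum_i \mathrm{tr}_{E_i/k(X)}\big(\phi_i^* s\big)$, where $\phi_i^* s \in T(E_i)$ and $\mathrm{tr}_{E_i/k(X)}$ is the field transfer. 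As $\@F$ is a subsheaf, $\phi_i^* s \in \@F(E_i)$; as $\@F$ is stable under field transfers, $\mathrm{tr}_{E_i/k(X)}(\phi_i^* s) \in \@F(k(X))$; and since $\@F(k(X))$ is a subgroup, the sum lies in $\@F(k(X))$. Combined with the first step, this gives $\alpha^* s \in \@F(X)$.

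The main obstacle is the middle identification: matching the generic-point restriction of the abstract Voevodsky transfer with the explicit sum of field transfers on $T$. This rests on base change for finite correspondences together with the fact that, on finitely generated separable extensions of $k$, the Voevodsky transfer on the torus $T$ agrees with Morel's geometric transfer, i.e. with the field transfers of Definition \ref{defn field transfers}. Once this is in place, stability under field transfers is precisely what propagates, via the unramifiedness reformulation, from generic points to all smooth schemes. (When $k$ is not perfect one should additionally verify that the residue fields $E_i$ are finitely generated separable over $k$ and that $E_i/k(X)$ is reached by an admissible tower of monogenic extensions; over a perfect field this is automatic.)
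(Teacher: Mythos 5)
Your proposal is correct and follows essentially the same route as the paper: you reduce a prime correspondence $Z$ to its generic fiber, where the Voevodsky transfer on $T$ becomes the field transfer $T(k(Z)) \to T(k(X))$ under which $\@F$ is stable, and then descend from the generic point using the identity $\@F(X) = T(X) \cap \@F(k(X))$ inside $T(k(X))$. Your proof of that identity via unramifiedness, residue maps and exactness of contraction is exactly the paper's argument with the Rost--Schmid complexes of $\@F$ and $T$, so there is nothing substantive to distinguish the two proofs.
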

\begin{proof}
Recall that $T$ is a strictly $\A^1$-invariant sheaf with transfers. Let $Z$ be a finite prime correspondence from $X$ to $Y$, that is, $Z$ is an irreducible closed subscheme of $X \times Y$ that is finite over $X$. We have the transfer map $Z^*: T(Y) \to T(X)$ and an injection $T(X) \hookrightarrow T(k(X))$. 

Let $k(Z)$ denote the function field of $Z$. Since $k(Z)$ is a finite extension of the function field $k(X)$ of $X$, there exists a field transfer map $T(k(Z)) \xrightarrow{tr} T(k(X))$. Since $\@F$ is stable under field transfers, we get the following commutative diagram 
\[
\begin{xymatrix}{
& \@F(X) \ar@{^{(}->}[dr] & \\
\@F(Y) \ar[r] \ar@{^{(}->}[d] & \@F(k(Z))\ar@{^{(}->}[d] \ar[r]^{tr} & \@F(k(X)) \ar@{^{(}->}[d]\\
T(Y) \ar[dr]^-{Z^*} \ar[r] & T(k(Z)) \ar[r]^{tr} & T(k(X)) \\
& T(X)\ar@{^{(}->}[ur] &}
\end{xymatrix}
\]
with injective vertical maps. We need to show that there exists a transfer map $\@F(Y) \to \@F(X)$ making the outer diagram commute. This will follow if we show that 
\[
T(X) \cap \@F(k(X)) = \@F(X),
\]
where all the groups are seen as subgroups of $T(k(X))$. Since $\@F$ and $T$ are both strictly $\#A^1$-invariant, we have the commutative diagram
\[
\begin{xymatrix}{
0 \ar[r] & \@F(X) \ar[r] \ar@{^{(}->}[d] & \@F(k(X))\ar@{^{(}->}[d] \ar[r]  & \underset{x \in X^{(1)}}{\oplus} \@F_{-1}(k(x)) \ar@{^{(}->}[d]\\
0 \ar[r] & T(X) \ar[r] & T(k(X)) \ar[r] & \underset{x \in X^{(1)}}{\oplus} T_{-1}(k(x))
}
\end{xymatrix}
\]
where the vertical maps are all injective (since contraction is an exact functor), the right horizontal maps are the differentials of the Rost-Schmid complexes of $\@F$ and $T$, and the rows are exact (by \cite[Corollary 5.43]{Morel-book}). Clearly, $T(X) \cap \@F(k(X))$ contains $\@F(X)$. Conversely, if an element of $T(k(X))$ is in the image of both $T(X)$ and $\@F(k(X)$, then its image in $\underset{x \in X^{(1)}}{\oplus} T_{-1}(k(x))$ is $0$. Since the rightmost vertical map is injective, it follows that its image in $\underset{x \in X^{(1)}}{\oplus} \@F_{-1}(k(x))$ is $0$. By the exactness of the top row, it follows that this element comes from $\@F(X)$. This completes the proof.
\end{proof}

\begin{lemma}
\label{lemma transfers}
Let $T$ be a torus over a field $k$. Then any homomorphism $\KMW_1 \to T$ of strictly $\A^1$-invariant sheaves preserves transfers.
\end{lemma}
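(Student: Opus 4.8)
The plan is to exploit the identification \eqref{eqn Hom from KMW}, which gives $\Hom_{Ab_{\A^1}(k)}(\KMW_1, T) = T_{-1}(k)$, together with the fact that a torus has a very small contraction. First I would record that for a torus $T$ with cocharacter lattice $N$ (a finitely generated free $\Z$-module with $\Gal(\-k/k)$-action), the first contraction $T_{-1}$ is the \'etale-locally constant Nisnevich sheaf $\underline{N}$ associated with $N$; this can be checked on $\mathbb{G}_m$, where $(\mathbb{G}_m)_{-1} = \underline{\Z}$, and extended by \'etale descent after splitting $T$. In particular $T_{-1}$ is a lattice, so its own contraction vanishes: $T_{-2} = (T_{-1})_{-1} = 0$, because a locally constant sheaf does not see the factor $\mathbb{G}_m$.

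Using $T_{-2}(k) = 0$, I would argue that any homomorphism $\phi\colon \KMW_1 \to T$ factors through the canonical projection $\KMW_1 \twoheadrightarrow \KM_1 = \mathbb{G}_m$. Indeed, $\phi \circ \eta \in \Hom_{Ab_{\A^1}(k)}(\KMW_2, T) = T_{-2}(k) = 0$, so $\phi$ kills the image of $\eta$, which is precisely the kernel of $\KMW_1 \to \KM_1$. This produces $\bar{\phi}\colon \mathbb{G}_m \to T$ with $\phi = \bar{\phi} \circ \pi$. Next, via the identification $T_{-1}(k) = N^{\Gal(\-k/k)} = \Hom_{k\text{-gp}}(\mathbb{G}_m, T)$, I would identify $\bar{\phi}$ with the morphism of sheaves induced by a genuine cocharacter $\lambda\colon \mathbb{G}_m \to T$ defined over $k$.

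Finally, I would check that each of the two factors preserves (field) transfers, so that their composite does. The projection $\pi\colon \KMW_1 \to \KM_1 = \mathbb{G}_m$ is compatible with transfers by the very construction of the Milnor--Witt transfers, which lift the Milnor $K$-theory transfers. For the cocharacter map $\lambda_*\colon \mathbb{G}_m \to T$, the transfer on $\mathbb{G}_m = \KM_1$ is the field norm, while the (Voevodsky, hence field) transfer on the torus $T$ is the algebraic norm, which is natural with respect to homomorphisms of commutative algebraic groups; thus $\lambda_*$ commutes with the norm maps. Composing, $\phi = \lambda_* \circ \pi$ preserves transfers, the compatibility following formally from the projection formula and naturality of norms. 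I expect the main obstacle to be this last step, namely pinning down that the Voevodsky transfer on an arbitrary torus agrees with the algebraic norm and is functorial under cocharacters, together with the clean identification of $\bar{\phi}$ with an honest $k$-rational cocharacter through the description of $T_{-1}$.
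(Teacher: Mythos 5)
Your proposal is correct, and it shares the paper's overall skeleton: factor $\phi$ through the canonical epimorphism $\KMW_1 \to \KM_1 = \mathbb{G}_m$, identify the residual map $\mathbb{G}_m \to T$ with a $k$-rational cocharacter, and conclude because both factors are compatible with (field) transfers. The genuine difference lies in how the factorization is produced. The paper starts from Morel's identification $\Hom_{Ab_{\A^1}(k)}(\KMW_1, T) \cong \Hom_{Shv_{\bullet}(Sm/k)}(\mathbb{G}_m, T)$, shows via the units of $k[x,x^{-1}]$ (after \'etale descent to the case $T = \mathbb{G}_m$) that the associated pointed map is a homomorphism of group schemes, and then deduces $\alpha(\eta[a][b]) = \alpha([ab]) - \alpha([a]) - \alpha([b]) = 0$, i.e.\ that $\alpha$ kills $\eta\KMW_2 = \ker(\KMW_1 \to \KM_1)$. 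You obtain the same vanishing structurally, from $\phi \circ \eta \in \Hom_{Ab_{\A^1}(k)}(\KMW_2, T) = T_{-2}(k)$ together with the computation $T_{-1} \cong \underline{N}$ (the cocharacter lattice as an \'etale-locally constant sheaf), whence $T_{-2} = 0$. Your route makes transparent the real reason the factorization exists (a torus has vanishing double contraction) and spares you from upgrading a pointed map of sheaves of sets to a group homomorphism; its cost is the identification of $T_{-1}$ for a possibly non-split torus, which rests on essentially the same Rosenlicht-type units computation plus a descent step that deserves care: strictly $\A^1$-invariant sheaves do not satisfy \'etale descent in general, but your argument is saved by the fact that $T_{-1}$ and $\underline{N}$ are both \'etale sheaves and your comparison map is an isomorphism on \'etale stalks. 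The endgame is the same in both proofs and is asserted at the same level of detail: Morel's geometric transfers on $\KMW_1$ induce the usual norm on $\KM_1$, and norms on tori are natural for homomorphisms of algebraic groups --- which also settles your closing worry about the Voevodsky transfer on $T$ agreeing with the algebraic norm, a point the paper glosses over equally quickly.
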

\begin{proof}
By \cite[Theorem 3.37]{Morel-book}, it follows that there is a bijection
\begin{equation}
\label{eqn KMW}
\Hom_{Ab_{\#A^1}(k)}(\KMW_1, T) = \Hom_{Shv_{\bullet}(Sm/k)}(\#G_m, T).
\end{equation}
However, any pointed map of Nisnevich sheaves of sets $\#G_m \to T$ is just a pointed morphism of schemes $\#G_m \to T$. We claim that it is a group homomorphism and preserves transfers. To see this, by \'etale descent, one reduces to the case $T = \#G_m$ and then uses the fact that the units of the ring $k[x, x^{-1}]$ are always of the form $cx^n$, for some $c \in k^{\times}$ and $n \in \#Z$. 

We next claim that any morphism $\KMW_1 \xrightarrow{\alpha} T$ of strictly $\A^1$-invariant sheaves factors through the canonical epimorphism $\KMW_1 \to \#G_m$. Let $\~\alpha: \#G_m \to T$ denote the morphism corresponding to $\alpha$ under the bijection \eqref{eqn KMW}. It suffices to prove the claim for sections over field extensions of $k$.  For any finitely generated, separable field extension $L/k$, we have a commutative diagram  
\[
\begin{xymatrix}{
\KMW_1(L) \ar[r]^-{\alpha} & T(L) \\
\#G_m(L) \ar[u] \ar[ur]_-{\~\alpha} &}
\end{xymatrix}
\]
where the vertical morphism is the canonical map sending a unit $a \in L^\times$ to the symbol $[a]$. Since $\~\alpha$ is a group homomorphism, it follows that 
\[
\alpha (\eta [a][b]) = 0,
\]
for every $a, b \in L^\times$. Therefore, the claim follows.
 
Now, since both the morphisms $\KMW_1 \to \#G_m$ and $\#G_m \to T$ preserve transfers, the lemma follows.
\end{proof}

For the proof of our main theorem, we will need the following extension of Lemma \ref{lemma transfers}.

\begin{lemma}
\label{lemma WR transfers}
Let $T$ be a torus over a field $k$ and let $L$ be a finite separable extension of $k$. Then any homomorphism ${\KMW_1}^{[L]} \to T$ of strictly $\A^1$-invariant sheaves preserves transfers.
\end{lemma}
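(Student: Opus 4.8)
The plan is to reduce Lemma \ref{lemma WR transfers} to the already-established Lemma \ref{lemma transfers} by exploiting the adjunction that defines the Weil restriction sheaf ${\KMW_1}^{[L]} = {f_L}_* f_L^* \KMW_1$. The key point is the base-change/adjunction isomorphism
\[
\Hom_{Ab_{\A^1}(k)}\bigl({f_L}_* f_L^* \KMW_1, T\bigr) \cong \Hom_{Ab_{\A^1}(L)}\bigl(f_L^* \KMW_1, f_L^! T\bigr),
\]
or more concretely the observation that a homomorphism out of ${f_L}_*f_L^*\KMW_1$ is determined by its restriction to $L$-points, where the sheaf looks like $\KMW_1$ over $L$. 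First I would spell out how a homomorphism $\varphi\colon {\KMW_1}^{[L]} \to T$ of strictly $\A^1$-invariant sheaves over $k$ corresponds, via the counit of the $({f_L}^*, {f_L}_*)$-adjunction, to a homomorphism $\varphi_L\colon \KMW_1 \to T_L$ of strictly $\A^1$-invariant sheaves over $L$, where $T_L = f_L^* T$ is a torus over $L$. By Lemma \ref{lemma transfers} applied over the base field $L$, the morphism $\varphi_L$ preserves transfers.

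Next I would show that preservation of transfers by $\varphi_L$ over $L$ propagates to preservation of transfers by $\varphi$ over $k$. The mechanism is the projection formula and the compatibility of the transfers on ${\KMW_1}^{[L]}$ with those on $\KMW_1$ over $L$: the transfer structure on the Weil restriction ${f_L}_*f_L^*\KMW_1$ as a sheaf over $k$ is induced, through the identification \eqref{eqn Weil restriction}, by the transfer structure on $\KMW_1$ over $L$ together with the transfers coming from the finite extension $L/k$. Concretely, a field transfer $tr^\alpha_n$ on sections of ${\KMW_1}^{[L]}$ over an extension of $k$ unwinds, under the adjunction, into field transfers on sections of $\KMW_1$ over the corresponding extensions of $L$, and $\varphi$ being the adjoint of $\varphi_L$ makes the relevant squares commute precisely because $\varphi_L$ respects transfers over $L$.

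The main obstacle I anticipate is the careful bookkeeping of the transfer structures across the adjunction: one must check that the geometric transfers $tr^\alpha_n$ on the Weil-restricted sheaf ${\KMW_1}^{[L]}$, defined as in \cite[page 98]{Morel-book}, really do correspond under the isomorphism \eqref{eqn Weil restriction} to the transfers on $\KMW_1$ over $L$ in a way that is functorial with respect to homomorphisms into $T$. This compatibility is what lets the conclusion of Lemma \ref{lemma transfers} over $L$ transport to the desired conclusion over $k$. Once this identification is in place, the proof is formal: $\varphi$ factors (over $L$) through the adjoint of a morphism $\KMW_1 \to T_L$ that preserves transfers, and the $k$-level transfers are assembled from the $L$-level ones via the projection formula, so $\varphi$ preserves transfers as well.
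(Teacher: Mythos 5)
Your proposal takes a genuinely different route from the paper's. The paper never changes the base field: given $\alpha\colon {\KMW_1}^{[L]} \to T$, it uses the set-theoretic section $\tau$ of the canonical epimorphism ${\KMW_1}^{[L]} \to {\#G_m}^{[L]}$ to define $\~\alpha = \alpha \circ \tau$, checks on field sections ${\#G_m}^{[L]}(E) = (L\otimes_k E)^{\times}$ (splitting $L\otimes_k E$ into factor fields) that $\~\alpha$ is a group homomorphism, deduces $\alpha(\eta[a][b]) = 0$ so that $\alpha$ factors through the Weil restriction torus ${\#G_m}^{[L]}$, and concludes because the projection ${\KMW_1}^{[L]} \to {\#G_m}^{[L]}$ and the torus homomorphism $\~\alpha$ preserve transfers; in effect it reruns the $\eta$-killing argument of Lemma \ref{lemma transfers} internally to the Weil restriction. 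You instead transpose $\varphi$ to a morphism over $L$ and quote Lemma \ref{lemma transfers} with base field $L$. That reduction is legitimate, but it should not be set up via $f_L^!$ and the implicit ambidexterity $f_L^! \cong f_L^*$, which the paper nowhere provides; use the paper's own ingredients instead: by \eqref{eqn Weil restriction} one has ${\KMW_1}^{[L]} \cong \KMW_1 \otimes \Z[\Spec L]$, and tensor-hom plus the $(f_L^*, {f_L}_*)$-adjunction give $\Hom_{Ab_{\A^1}(k)}({\KMW_1}^{[L]}, T) \cong \Hom_{Ab_{\A^1}(k)}(\KMW_1, {f_L}_*T_L) \cong \Hom_{Ab_{\A^1}(L)}(\KMW_1, T_L)$, where $T_L = f_L^*T$.

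There are, however, two genuine issues in the part you call formal. First, the correspondence $\varphi \leftrightarrow \varphi_L$ must be the adjunction above, under which $\varphi_L$ is the restriction of $\varphi$ to the \emph{diagonal} factor of $({\KMW_1}^{[L]})_L$ (the section of the counit that exists because $L \otimes_k L$ has the diagonal as a component). If one instead takes the naive restriction $f_L^*(\varphi \circ \eta)$ along the unit $\eta\colon \KMW_1 \to {\KMW_1}^{[L]}$, the answer is off by degree/trace-form factors: for $\varphi$ the composite of the projection to ${\#G_m}^{[L]}$ with the norm torus homomorphism ${\#G_m}^{[L]} \to \#G_m$, and $L/k$ Galois of degree $d$, that recipe returns $[u] \mapsto u^d$ rather than $[u] \mapsto u$, and the reconstruction of $\varphi$ from $\varphi_L$ by norms fails. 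Your write-up ("via the counit'') does not pin this down. Second, even with the correct adjoint, recovering the transfer-compatibility of $\varphi$ requires (i) that Morel's geometric transfers on ${\KMW_1}^{[L]}$ for an extension $E/F$ over $k$ agree, under ${\KMW_1}^{[L]}(E) = \KMW_1(L\otimes_k E)$, with geometric transfers of $\KMW_1$ between the factor fields of $L\otimes_k E$ and $L\otimes_k F$, and (ii) transitivity of the Voevodsky transfers (norms) on $T$. Point (ii) is standard; point (i) is true (Morel's transfers are built from cohomology with supports on $\P^1$, which is computed after the exact finite pushforward ${f_L}_*$), but it is exactly the verification you flag as "the main obstacle" and never carry out, and in your route it carries the entire weight of the lemma. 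The paper's factorization trick avoids this: it confines the unproved transfer compatibility to the single canonical map ${\KMW_1}^{[L]} \to {\#G_m}^{[L]}$ and otherwise only uses that homomorphisms of tori preserve transfers. What your route would buy, once (i) is actually proved, is modularity: it applies verbatim to any sheaf ${f_L}_*f_L^*M$ for which the analogue of Lemma \ref{lemma transfers} is known over $L$.
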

\begin{proof}
The canonical epimorphism $({\KMW_1})_L \to {\#G_m}_L$ of strictly $\A^1$-invariant sheaves over $L$ induces a morphism ${\KMW_1}^{[L]} \to {\#G_m}^{[L]}$, which admits a canonical section $\tau$, induced by the canonical morphism of sheaves of sets ${\#G_m}_L \to ({\KMW_1})_L$. Note that $\tau$ is just a morphism of the underlying sheaves of sets.

Given a morphism $\alpha: {\KMW_1}^{[L]} \to T$, define $\~\alpha: {\#G_m}^{[L]} \to T$ by $\~\alpha := \alpha \circ \tau$. We claim that the diagram 
\[
\begin{xymatrix}{
{\KMW_1}^{[L]} \ar[d] \ar[r]^-{\alpha} & T \\
\#G_m^{[L]}  \ar[ur]_-{\~\alpha} &}
\end{xymatrix}
\]
commutes. It suffices to verify this for sections over field extensions of $k$, since ${\KMW_1}^{[L]}$, $\#G_m^{[L]}$ and $T$ are all strictly $\A^1$-invariant sheaves over $k$.  Let $E$ be a finitely generated, separable field extension of $k$. Let $a, b \in \#G_m^{[L]}(E) = (L \otimes_k E)^{\times}$. Since $k$ is perfect, $L \otimes_k E$ is a product of finite field extensions of $E$. One easily reduces to the case where $L \otimes_k E$ is a field. By a similar argument as in the proof of Lemma \ref{lemma transfers}, it follows that $\~\alpha$ is a group homomorphism. Therefore, we have
\[
\alpha([ab]) = \alpha([a]) + \alpha([b]).
\]
But then $\alpha(\eta [a][b]) = 0$, since $\alpha$ is a homomorphism of strictly $\A^1$-invariant sheaves. This proves the claim. 

We end the proof along similar lines as in the proof of Lemma \ref{lemma transfers}.  The canonical morphism ${\KMW_1}^{[L]} \to {\#G_m}^{[L]}$ and $\~\alpha$ preserve transfers. Hence, so does $\alpha$.
\end{proof}

We are now set to prove the main theorem of this paper.

\begin{theorem}
\label{theorem transfers}
Let $G$ be a quasi-split group over a perfect field $k$. Fix a Borel subgroup $B$ of $G$ and the corresponding maximal torus $T$ of $G$.  Then the composition of natural maps $T \to G \to \pi_0^{\A^1}(G)$ is an epimorphism of Nisnevich sheaves.  Moreover, the sheaf $\pi_0^{\A^1}(G)$ is a strictly $\A^1$-invariant sheaf with transfers. 
\end{theorem}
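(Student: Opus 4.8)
The plan is to build on the $\A^1$-fiber sequence $B \to G \to G/B$ and to reduce the existence of Voevodsky transfers on $\pi_0^{\A^1}(G)$ to the fact, established in Lemma \ref{lemma WR transfers}, that any homomorphism from a Weil restriction of $\KMW_1$ to a torus preserves transfers. First I would recall the setup from the corollary to Theorem \ref{theorem quasi-split}. Since $G/B$ is $\A^1$-connected by Theorem \ref{theorem quasi-split}, the pointed sheaf $\pi_0^{\A^1}(G/B)$ is trivial, so the long exact sequence
\[
\pi_1^{\A^1}(G/B) \to \pi_0^{\A^1}(B) = T \to \pi_0^{\A^1}(G) \to \pi_0^{\A^1}(G/B) = *
\]
associated with the $\A^1$-homotopy principal $B$-fibration shows that $T \to \pi_0^{\A^1}(G)$ is an epimorphism of Nisnevich sheaves and identifies $\pi_0^{\A^1}(G) \cong T/\mathcal F$, where $\mathcal F := \im\bigl(\pi_1^{\A^1}(G/B) \to T\bigr)$. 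Since $Ab_{\A^1}(k)$ is abelian and $\pi_1^{\A^1}(G/B) \cong \HA_1(G/B)$ is strictly $\A^1$-invariant, $\mathcal F$ is a strictly $\A^1$-invariant subsheaf of $T$ and the quotient $\pi_0^{\A^1}(G)$ is strictly $\A^1$-invariant. This recovers the first two assertions, so the real content is the transfer structure.

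Because $T$ is a sheaf with Voevodsky transfers and the category of Nisnevich sheaves with transfers is abelian, it suffices to realize $\mathcal F \hookrightarrow T$ as a monomorphism of sheaves with transfers: the cokernel $T/\mathcal F$ then carries a transfer structure whose underlying Nisnevich sheaf is $\pi_0^{\A^1}(G)$. Equivalently, I must show $\mathcal F$ is stable under the transfers of $T$, and by Lemma \ref{lemma torus subsheaf} this reduces to showing that $\mathcal F$ is stable under \emph{field} transfers. To control $\mathcal F$, I would use the cellular description: by the $\A^1$-Hurewicz theorem $\pi_1^{\A^1}(G/B) \cong \HA_1(G/B) \cong \Hcell_1(G/B)$, and I would equip $G/B$ with the naive cellular structure coming from the absolute Bruhat decomposition. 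The crucial point is that the unique open cell $\Omega_0$ is the big cell $Bw_0B/B$, which is $\A^1$-contractible by Lemma \ref{lemma unipotent}; hence $\Ccell_0(G/B) \cong \Z$, and since $\Hcell_0(G/B) \cong \Z$ by $\A^1$-connectedness, the differential $\partial_1$ must vanish. Consequently $\ker \partial_1 = \Ccell_1(G/B)$ and
\[
\Hcell_1(G/B) \cong \Ccell_1(G/B)\big/\im \partial_2 = \Bigl(\bigoplus_j {\KMW_1}^{[L_{1j}]}\Bigr)\big/\im \partial_2,
\]
where the $L_{1j}$ are the finite separable extensions indexing the codimension-one Bruhat cells.

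It follows that the boundary map $\pi_1^{\A^1}(G/B) \to T$ lifts, along the quotient $\Ccell_1(G/B) \twoheadrightarrow \Hcell_1(G/B)$, to a homomorphism $\phi = \bigoplus_j \phi_j : \bigoplus_j {\KMW_1}^{[L_{1j}]} \to T$ with $\im \phi = \mathcal F$. Each component $\phi_j : {\KMW_1}^{[L_{1j}]} \to T$ preserves field transfers by Lemma \ref{lemma WR transfers}, so $\phi$ preserves field transfers. Evaluating over a finitely generated separable extension $E/k$, where taking sections of strictly $\A^1$-invariant sheaves is exact so that $\mathcal F(E) = \phi_E\bigl(\bigoplus_j {\KMW_1}^{[L_{1j}]}(E)\bigr)$, the commutation of $\phi$ with field transfers gives $tr\bigl(\mathcal F(E)\bigr) \subseteq \mathcal F(F)$ for every monogenic $F \subset E$. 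Thus $\mathcal F$ is stable under field transfers, and Lemma \ref{lemma torus subsheaf} upgrades this to stability under Voevodsky transfers. Therefore $\pi_0^{\A^1}(G) = T/\mathcal F$ is a strictly $\A^1$-invariant sheaf with transfers.

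I expect the main obstacle to lie in the cellular reduction: verifying that $\partial_1 = 0$, so that $\Hcell_1(G/B)$ is a genuine \emph{quotient} of $\bigoplus_j {\KMW_1}^{[L_{1j}]}$ rather than a mere subquotient, and confirming that the boundary homomorphism $\pi_1^{\A^1}(G/B) \to T$ is compatible with this identification so that it truly lifts to $\phi$. The remaining passage from ``$\phi$ preserves transfers'' to ``$\im \phi$ is field-transfer-stable'' hinges on the exactness of evaluation on fields for strictly $\A^1$-invariant sheaves, which I would cite from Morel's theory.
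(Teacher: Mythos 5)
Your proposal is correct and follows essentially the same route as the paper: the $\A^1$-fiber sequence $B \to G \to G/B$ (the paper works with the $\A^1$-equivalent $T \to G \to G/T$), the reduction via Lemma \ref{lemma torus subsheaf} to field-transfer stability of the image of the boundary map, the identification $\pi_1^{\A^1}(G/B) \cong \Hcell_1(G/B)$ together with $\partial_1 = 0$ to obtain a surjection from $\Ccell_1(G/B) = \bigoplus_j {\KMW_1}^{[L_{1j}]}$, and finally Lemma \ref{lemma WR transfers} applied componentwise. The only (harmless) divergence is your argument that $\partial_1 = 0$, deduced from $\Ccell_0 \cong \Z$ and $\Hcell_0 \cong \Z$, whereas the paper observes directly that any morphism from $\KMW_1$ to the constant sheaf $\Z$ vanishes on sections over fields; both verifications are valid.
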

\begin{proof}
We have an $\#A^1$-fibration $T \to G \to G/T$ and the corresponding long exact sequence of $\A^1$-homotopy sheaves has the form
\[
\cdots \to \pi_1^{\#A^1}(G/T) \xrightarrow{\theta} T \to \pi_0^{\#A^1}(G) \to \pi_0^{\#A^1}(G/T). 
\]
By Theorem \ref{theorem quasi-split}, $G/T$ is $\#A^1$-connected, so we have an exact sequence
\[
\pi_1^{\#A^1}(G/T) \xrightarrow{\theta} T \to \pi_0^{\#A^1}(G) \to 0. 
\]
Since $T$ has transfers, in order to show that $\pi_0^{\#A^1}(G)$ has transfers, it suffices to show that the image of $\theta$ is stable under transfers of $T$. By Lemma \ref{lemma torus subsheaf}, it suffices to show that the image of $\theta$ is stable under field transfers.

Consider the absolute Bruhat decomposition of $G/B$, which is $\A^1$-weakly equivalent to $G/T$. As described in Section \ref{section cellular transfers}, the cellular $\A^1$-chain complex of $G/T$ has the form
\[
\cdots \to \Ccell_2(G/T) \to \Ccell_1(G/T) = \oplus_i ~ {\KMW_1}^{[L_i]} \xrightarrow{\partial_1}  \Ccell_0(G/T) = \#Z.
\]
We claim that the differential $\partial_1$ is the zero map. Indeed, this can be verified on every finitely generated, separable field extension $F$ over $k$. Now, for each $i$, we have ${\KMW_1}^{[L_i]}(F) = {\KMW_1}(L_i \otimes_k F)$. Since $k$ is perfect, $L_i \otimes_k F$ is a direct product of finite separable field extensions of $F$ and the claim follows since any morphism from $\KMW_1$ to a constant sheaf has to be the zero map.

Therefore, we have an epimorphism $\Ccell_1(G/T) \twoheadrightarrow \Hcell_1(G/T)$.  Theorem \ref{theorem quasi-split} implies that $\Hcell_1(G/T) \cong \pi_1^{\#A^1}(G/T)$. Hence, the image of $\theta$ is the same as the image of the composite
\[
\Ccell_1(G/T) \twoheadrightarrow \pi_1^{\#A^1}(G/T) \xrightarrow{\theta} T.
\]
By Lemma \ref{lemma WR transfers}, this composite preserves field transfers. This completes the proof of the theorem.
\end{proof}

\section{Applications to the norm principle}

Let $G$ be a reductive algebraic group over a field $k$. The goal of this section is to show that existence of transfer maps on $\pi_0^{\A^1}(G)$ implies that the norm principle holds for $G$. We first recall the following definition of norm principle.

\begin{definition}(\cite[page 2]{barquero-merkurjev})
\label{defn norm principle}
Let $H$ be a torus over $k$. Let $\phi:G\to H$ be a homomorphism. For any separable extension $L/F$ of $k$ fields we have the following diagram 
$$\xymatrix{
G(L) \ar[r]^{\phi_L} & H(L) \ar[d]^{N_{L/F}} \\
G(F) \ar[r]^{\phi_F} & H(F)
}$$
where the right vertical map $N_{L/F}$ is the norm (or the field transfer) map. We say that the norm principle holds for $\phi$, if for any $L/F$ as above, we have 
$$ {\rm Image}(N_{L/F}\circ \phi_L) \subset \phi_K.$$
We say the norm principle holds for $G$ if it holds for any homomorphism from $G$ to a torus over $k$. 
\end{definition}

\begin{remark}
The version of the norm principle given in Definition \ref{defn norm principle} is due to Merkurjev \cite{Merkurjev-norm-principle}. Although this version appears to be different from the version given by Gille \cite{Gille-1993}, which is stated in terms of torsors for the group in question, the two versions of the norm principle are equivalent (see \cite[Theorem 2.1]{Soofiani}).
\end{remark}

\begin{theorem}
\label{theorem pi0norm}
Let $G$ be any a connected reductively algebraic group over a field $k$ such that $\pi_0^{\A^1}(G)$ has transfers. Then the norm principle holds for $G$. 
\end{theorem}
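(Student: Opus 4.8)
The plan is to reduce the norm principle to a short diagram chase, the inputs being that any homomorphism into a torus factors through $\pi_0^{\A^1}(G)$, the surjectivity of the class map on field points, and the transfer hypothesis.

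First I would fix a homomorphism $\phi \colon G \to H$ into a torus $H$ and a finite separable extension $L/F$ of field extensions of $k$; by a standard filtered-colimit argument I may assume that $F$, and hence $L$, is finitely generated over $k$, so that these fields lie in $Sm_k$ and the field transfers of Definition \ref{defn field transfers} are available. A torus is $\A^1$-rigid, so $H$ is $\A^1$-invariant with $\pi_0^{\A^1}(H) = H$; consequently the morphism of Nisnevich sheaves $\phi$ factors canonically as $G \to \pi_0^{\A^1}(G) \xrightarrow{\bar\phi} H$, where $\bar\phi$ is a homomorphism of sheaves of abelian groups. Since $L/F$ is finite separable it is monogenic by the primitive element theorem, so the field transfer $tr_{L/F} \colon \pi_0^{\A^1}(G)(L) \to \pi_0^{\A^1}(G)(F)$ exists by hypothesis; moreover, on the torus $H$ the sheaf-with-transfers structure realizes $N_{L/F}$ as its transfer.

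I would then record the surjectivity of the class map $G(F) \twoheadrightarrow \pi_0^{\A^1}(G)(F)$: the canonical morphism $G \to \pi_0^{\A^1}(G)$ is an epimorphism of Nisnevich sheaves of sets, and epimorphisms of Nisnevich sheaves are surjective on sections over henselian local schemes such as $\Spec F$. This is what will let me realize any transferred class in $\pi_0^{\A^1}(G)(F)$ by an honest $F$-point of $G$.

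The heart of the matter, and the step I expect to be the main obstacle, is the claim that $\bar\phi$ is compatible with transfers, i.e.\ that
\[
\xymatrix{
\pi_0^{\A^1}(G)(L) \ar[r]^-{\bar\phi_L} \ar[d]_-{tr_{L/F}} & H(L) \ar[d]^-{N_{L/F}} \\
\pi_0^{\A^1}(G)(F) \ar[r]^-{\bar\phi_F} & H(F)
}
\]
commutes. Granting this, the theorem follows at once: for $g \in G(L)$ with class $\bar g \in \pi_0^{\A^1}(G)(L)$, one computes $N_{L/F}(\phi_L(g)) = N_{L/F}(\bar\phi_L(\bar g)) = \bar\phi_F(tr_{L/F}(\bar g))$, and by the surjectivity above $tr_{L/F}(\bar g)$ is the class of some $g' \in G(F)$, so that $N_{L/F}(\phi_L(g)) = \phi_F(g')$ lies in the image of $\phi_F$, which is precisely the conclusion of Definition \ref{defn norm principle}. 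To prove the square commutes I would exploit that the target is a torus: by étale descent together with Weil restriction the problem reduces to homomorphisms into $\mathbb{G}_m$, and then to the assertion that every homomorphism of sheaves from $\pi_0^{\A^1}(G)$ into $\mathbb{G}_m$ automatically preserves transfers. This is the general form of Lemma \ref{lemma transfers} and Lemma \ref{lemma WR transfers}, whose proofs treat $\KMW_1$ and its Weil restrictions; the genuine difficulty is that for a general reductive $G$ the sheaf $\pi_0^{\A^1}(G)$ is not presented by such explicit generators, so I would argue abstractly, using the projection formula to reduce transfer-compatibility of a map into $\mathbb{G}_m$ to the multiplicativity of the norm on units.
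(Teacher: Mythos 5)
Your overall skeleton — factoring $\phi$ through $G \to \pi_0^{\A^1}(G) \xrightarrow{\bar\phi} H$ using $\A^1$-rigidity of the torus, surjectivity of $G(F) \twoheadrightarrow \pi_0^{\A^1}(G)(F)$ on field points, and the concluding diagram chase — is exactly the paper's argument. But there is a genuine gap at the step you yourself identify as the heart of the matter: the commutativity of the transfer square, i.e.\ that $\bar\phi$ intertwines the hypothesized transfers on $\pi_0^{\A^1}(G)$ with the norm on $H$. Your proposed resolution is to reduce (via \'etale descent and Weil restriction) to maps into $\#G_m$ and then invoke ``the general form'' of Lemma \ref{lemma transfers} and Lemma \ref{lemma WR transfers}, namely that \emph{every} homomorphism of sheaves from $\pi_0^{\A^1}(G)$ to $\#G_m$ automatically preserves transfers. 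No such general statement is available, and it does not follow from those lemmas: their proofs depend essentially on the explicit presentation of the source — $\KMW_1$ (resp.\ its Weil restriction ${\KMW_1}^{[L]}$) surjects onto $\#G_m$ with kernel generated by the elements $\eta[a][b]$, which any homomorphism to a torus kills, so the map factors through a pointed scheme morphism $\#G_m \to T$, and only then does one get transfer compatibility from the computation of units of $k[x,x^{-1}]$. For an abstract sheaf with transfers there is no analogous presentation, and ``argue abstractly, using the projection formula'' is not an argument; transfers are extra structure, and a homomorphism of underlying sheaves has no a priori reason to respect it. Since the whole theorem reduces to this square, the proposal as written does not close.

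What the paper does instead — and this is the idea your proposal is missing — is to sidestep any abstract claim by exploiting a presentation of $\pi_0^{\A^1}(G)$: by Theorem \ref{theorem transfers}, the natural map $T \to \pi_0^{\A^1}(G)$ from a maximal torus is an epimorphism of Nisnevich sheaves \emph{which preserves transfers}. Since an epimorphism of Nisnevich sheaves is surjective on sections over fields, every class in $\pi_0^{\A^1}(G)(L)$ lifts to $T(L)$, and the transfer compatibility of $\bar\phi$ then follows from that of the composite $T \to \pi_0^{\A^1}(G) \to H$, which is a homomorphism of tori and hence obviously commutes with norms. Note that this means the proof really uses more than the literal hypothesis ``$\pi_0^{\A^1}(G)$ has transfers'': it uses that the transfers are induced from the torus $T$ through a transfer-preserving epimorphism, which is precisely what Theorem \ref{theorem transfers} supplies in the quasi-split case. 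Your attempt to work with the bare hypothesis runs into exactly the wall that this extra input is designed to circumvent.
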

\begin{proof} Let $\phi : G\to H$ be any homomorphism from $G$ to a torus $H$ and $L/F$ be a separable extension of fields over $k$. Since $H$  is a torus the natural map 
$$\pi_0^{\A^1}(H) \xrightarrow{\sim} H$$
is an isomorphism.  We thus have a commutative diagram 
$$\xymatrix{
G \ar[d] \ar[r]^{\phi} & H \ar[d]^{\sim}\\
\pi_0^{\A^1}(G) \ar[r] & \pi_0^{\A^1}(H)
}$$
We first claim that $\pi_0^{\A^1}(G)\to H$ preserves transfers. To see this, let $T\subset G$ be a maximal torus. By Theorem \ref{theorem transfers}, the natural map $T\to \pi_0^{\A^1}(G)$ is an epimorphism of sheaves which preserves transfers. Thus it is enough to show that $T\to H$ preserves transfers. But this is clear since $T\to H$ is a homomorphism of tori. Also note that $G\to \pi_0^{\A^1}(G)$ is an epimorphism of Nisnevich sheaves. In particular, it is epimorphism on sections over any field. We have the following commutative diagram: 
$$\xymatrix{
G(L) \ar@{>>}[r] & \pi_0^{\A^1}(G)(L)\ar[d]^{tr_{L/F}} \ar[r] & H(L) \ar[d]^{N_{L/F}} \\
G(F) \ar@{>>}[r] & \pi_0^{\A^1}(G)(F) \ar[r] & H(F).
}$$
The norm principle for $\phi$ now directly follows.
\end{proof}

\begin{corollary}
\label{corollary norm principle}
The norm principle holds for any quasi-split group over a perfect field $k$.
\end{corollary}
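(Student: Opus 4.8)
The plan is to obtain the statement as a formal consequence of the two principal results already established, with essentially no new work required. A quasi-split group is by definition reductive, and hence a connected reductive algebraic group over $k$ in the sense of Section \ref{section strict}; thus the hypotheses of Theorem \ref{theorem pi0norm} are in force, and the only thing that needs to be checked is the single standing assumption of that theorem, namely that $\pi_0^{\A^1}(G)$ admits transfers.

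First I would invoke Theorem \ref{theorem transfers}: since $G$ is quasi-split and $k$ is perfect, the sheaf $\pi_0^{\A^1}(G)$ is a strictly $\A^1$-invariant sheaf with (Voevodsky) transfers. This is precisely the input demanded by Theorem \ref{theorem pi0norm}. Feeding it in, Theorem \ref{theorem pi0norm} then yields the norm principle for $G$ with respect to every homomorphism $\phi \colon G \to H$ into a torus $H$ over $k$; that is, for every separable extension $L/F$ of fields over $k$ one has ${\rm Image}(N_{L/F} \circ \phi_L) \subset {\rm Image}(\phi_F)$, which is exactly the assertion of Definition \ref{defn norm principle}.

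Since both ingredients are already proved, there is no genuine obstacle at this stage; the corollary is a direct chaining of Theorem \ref{theorem transfers} into Theorem \ref{theorem pi0norm}. The only point deserving a moment's attention is purely bookkeeping: confirming that a quasi-split group meets the connectedness and reductivity hypotheses of Theorem \ref{theorem pi0norm}, which is immediate from the definition of quasi-split adopted here, and confirming that the transfers produced by Theorem \ref{theorem transfers} are the full Voevodsky transfers (rather than merely field transfers), so that Theorem \ref{theorem pi0norm} applies verbatim. Both checks are routine, and the proof is complete once they are recorded.
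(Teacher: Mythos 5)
Your proposal is correct and matches the paper's own proof exactly: the corollary is obtained by chaining Theorem \ref{theorem transfers} into Theorem \ref{theorem pi0norm}, with no further argument needed. Your additional bookkeeping remarks (connected reductivity of quasi-split groups, full Voevodsky transfers rather than only field transfers) are sound and merely make explicit what the paper treats as immediate.
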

\begin{proof}
This is a direct consequence of Theorem \ref{theorem transfers} and Theorem \ref{theorem pi0norm}. 
\end{proof}

\end{document}